\documentclass[11pt, reqno]{amsart}
\usepackage{amsmath}
\usepackage{amsfonts}
\usepackage{amsthm}
\usepackage{latexsym}
\usepackage{amssymb}
\usepackage{enumerate}
\usepackage{mathrsfs}
\usepackage{color}
\usepackage{vmargin}
\usepackage{graphicx}  
\usepackage{esvect}
\usepackage{esint}
\usepackage{leftindex}
\usepackage[utf8]{inputenc}
\usepackage[T1]{fontenc}

\makeatletter
\newcommand{\vast}{\bBigg@{4}}
\newcommand{\Vast}{\bBigg@{5}}
\newcommand{\biggg}{\bBigg@{3}}
\newcommand{\Biggg}{\bBigg@{3.5}}
\makeatother

\def\XXint#1#2#3{{\setbox0=\hbox{$#1{#2#3}{\int}$ }
\vcenter{\hbox{$#2#3$ }}\kern-.6\wd0}}

\DeclareMathOperator*{\diam}{diam}

\newcommand*{\dd}{\mathop{}\!\mathrm{d}}
\newcommand{\vertiii}[1]{{\left\vert\kern-0.25ex\left\vert\kern-0.25ex\left\vert #1 
    \right\vert\kern-0.25ex\right\vert\kern-0.25ex\right\vert}}

\newcommand{\mres}{\mathbin{\vrule height 1.6ex depth 0pt width
0.13ex\vrule height 0.13ex depth 0pt width 1.3ex}}
\long\def\symbolfootnote[#1]#2{\begingroup%
\def\thefootnote{\fnsymbol{footnote}}\footnote[#1]{#2}\endgroup}

\setmarginsrb{20mm}{20mm}{20mm}{20mm}{10mm}{10mm}{10mm}{10mm}

\newtheoremstyle{remark}
  {}{}{}{}{\bfseries}{.}{.5em}{{\thmname{#1 }}{\thmnumber{#2}}{\thmnote{ (#3)}}}
\theoremstyle{remboldstyle}

\RequirePackage{amsthm}

\newtheorem{tw}{Theorem}[section]

\newtheorem{lem}[tw]{Lemma}

\newtheorem{remark}[tw]{Remark}
\newtheorem{ex}[tw]{Example}
\newtheorem{q}{Question}

\def\={\hspace{-3mm}&=&\hspace{-3mm}}

\usepackage{tikz}
\usetikzlibrary{arrows,calc,patterns,cd}
\usepackage{wrapfig}

\let\oldmarginpar\marginpar
\renewcommand{\marginpar}[2][rectangle,draw,text width= 1.5cm,rounded corners]{
    \oldmarginpar{
    \scriptsize \tikz \node at (0,0) [#1]{#2};}
    }
\begin{document}

\date{}

\title[Optimal Embeddings of Campanato and Morrey spaces]{\bf Optimal Embeddings of Campanato and Morrey spaces}

\author{Przemys{\l}aw G\'{o}rka \& Zuzanna St\k{e}pie\'{n}}

\maketitle
\begin{abstract}
We investigate Campanato and Morrey spaces on metric-measure spaces and establish their relationships with H\"{o}lder spaces. In particular, we show that the embedding of a Campanato space into a Hölder space is equivalent to the existence of a lower bound for the underlying measure. Several examples are provided to illustrate the results.
\end{abstract}
\bigskip

\noindent
{\bf Keywords}: Campanato spaces, Morrey spaces, metric-measure spaces, lower Ahlfors regular measure
\medskip

\noindent
\emph{Mathematics Subject Classification (2020):} 46E35; 30L99

\section{Introduction}
The Morrey and Campanato spaces play a fundamental role in harmonic analysis and partial differential equations (see, e.g., \cite{C, Gi, KJF, LR, P, YYZ}). Various definitions of these spaces have been introduced and studied; see for instance the work of Spanne \cite{Sp} and Nakai \cite{N}. For an overview of Morrey and Campanato spaces, we refer the reader to \cite{Adams, RSS}. In the present paper we investigate these spaces in the setting of metric-measure spaces, where alternative definitions have already been considered (see, e.g., \cite{G2}).

The primary objective of this work is to study Campanato and Morrey spaces on metric-measure spaces. We establish connections between the embedding of Campanato spaces into H\"{o}lder spaces on metric-measure spaces and the lower regularity of the measure. The necessary condition for the embedding of Campanato spaces into H\"{o}lder spaces in Euclidean domains was previously obtained by Op\v{e}la \cite{O}. We emphasize that the role of measure regularity in embedding results between function spaces on metric-measure spaces has been examined before. For instance, in the case of Sobolev spaces, Alvarado-G\'{o}rka-Haj{\l}asz \cite{Alvarado} described the relation between embeddings of the Haj{\l}asz–Sobolev space $M^{1,p}$ and the lower regularity of the measure (see also \cite{Gorka} for partial results, \cite{AYY, Karak1, Karak2} for results concerning Triebel–Lizorkin and Besov spaces, and \cite{GS} for fractional Sobolev spaces).

The remainder of this article is organized as follows. In Section~2 we recall the basic terminology concerning metric-measure spaces and introduce the main classes of function spaces considered in this paper, including the Campanato, Morrey, and H\"{o}lder spaces. We also present several technical lemmas that will play a central role in the proofs of the main theorems. Section~3 contains our principal results. In Theorems \ref{thm:embMorrCampLeb} and \ref{thm:embCampMorr} we investigate the relations between Morrey and Campanato spaces. The connections between Campanato and H\"{o}lder spaces are established in Theorems \ref{camp} and \ref{dolne}, and summarized in Theorems \ref{ost1} and \ref{ost2}. Many of the results are illustrated with examples, and we conclude with some open problems.

\section{Preliminaries}

\subsection{Metric-measure spaces}
We will always assume that metric spaces have at least two points. 
  We say that a metric space \(\left(X,d\right)\) is \texttt{$\eta$-uniformly perfect}, where $\eta \in (0,1)$ if for each $x \in X$ and each $r \in (0,\infty)$ one has
   \[ 
B\left(x,r\right)\setminus B\left(x,\eta r\right)\neq \emptyset, \,\text{whenever} \,X\setminus B\left(x,r\right)\neq \emptyset.
\]
Let us note that the class of uniformly perfect spaces contains connected spaces. Furthermore, this class contains disconnected spaces, e.g. the Cantor set.
\begin{lem}\label{lem:goodBall}
    Let \(\left(X,d\right)\) be $\eta$-uniformly perfect metric space, where $ \eta \in (0,1)$. Let \(x\in X, r \in (0,\infty)\) be such that \(X\setminus B\left(x,r\right)\neq \emptyset\). Then, for every $x_0 \in X$ there exists \(\Tilde{x}\in X\) such that \(B\left(\Tilde{x}, \Tilde{r}\right)\subset B\left(x,r\right)\) and \(x_0\notin B\left(\Tilde{x}, 4\Tilde{r}\right)\), where \(\Tilde{r}=\eta r/ 20\).
\end{lem}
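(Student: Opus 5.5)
Fix $x_0\in X$ and write $\Tilde r=\eta r/20$. We look for a ball of radius $\Tilde r$ inside $B(x,r)$ whose center lies at distance at least $4\Tilde r$ from $x_0$. A center $\Tilde x$ with $d(\Tilde x,x)<r-\Tilde r$ gives $B(\Tilde x,\Tilde r)\subset B(x,r)$ by the triangle inequality. A center with $d(\Tilde x,x_0)\ge 4\Tilde r$ gives $x_0\notin B(\Tilde x,4\Tilde r)$. So it suffices to find a point $\Tilde x$ satisfying both $d(\Tilde x,x)<r-\Tilde r$ and $d(\Tilde x,x_0)\ge 4\Tilde r=\eta r/5$. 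We split into two cases according to the position of $x_0$.

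\emph{Case 1: $d(x,x_0)\ge \eta r/5$.} Take $\Tilde x=x$. Then $d(\Tilde x,x)=0<r-\Tilde r$ and $d(\Tilde x,x_0)\ge 4\Tilde r$, as required.

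\emph{Case 2: $d(x,x_0)<\eta r/5$.} Here we use uniform perfectness at a smaller scale. Set $s=r/2$. Since $X\setminus B(x,r)\neq\emptyset$ and $B(x,s)\subset B(x,r)$, also $X\setminus B(x,s)\neq\emptyset$. The $\eta$-uniform perfectness then yields a point $y$ with $\eta r/2\le d(x,y)<r/2$. Put $\Tilde x=y$ and check both conditions:
\begin{itemize}
\item $d(\Tilde x,x)<r/2\le r-\Tilde r$, because $\Tilde r=\eta r/20<r/2$;
\item $d(\Tilde x,x_0)\ge d(y,x)-d(x,x_0)> \eta r/2-\eta r/5=3\eta r/10>\eta r/5=4\Tilde r$.
\end{itemize}
Hence $x_0\notin B(\Tilde x,4\Tilde r)$ and $B(\Tilde x,\Tilde r)\subset B(x,r)$.

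The only delicate point is choosing the scale at which uniform perfectness is applied. Applying it at radius $r$ itself could produce a point near the boundary of $B(x,r)$, and then the small ball around it might not fit inside. Shrinking to $r/2$ removes this problem and still gives an annulus point far enough from $x$, hence from $x_0$. The constant $1/20$ leaves ample room for both conditions.
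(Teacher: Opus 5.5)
Your proof is correct and is essentially the paper's argument. It uses the same two candidate centers, $x$ itself and a point of the annulus $B(x,r/2)\setminus B(x,\eta r/2)$ obtained from uniform perfectness at scale $r/2$, and the triangle inequality decides which one works. The only differences are cosmetic: you split directly on whether $d(x,x_0)\ge \eta r/5$, whereas the paper first picks the annulus point $x_1$ and then shows that one of $x$, $x_1$ lies at distance at least $\eta r/4$ from $x_0$.
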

\begin{proof}
    From uniform perfectness, there exists \(x_1\in B\left(x,r/2\right)\setminus B\left(x,\eta r/2\right)\). Thus, \(d\left(x,x_0\right)\geq \eta r/4\) or \(d\left(x_1,x_0\right)\geq \eta r/4\) holds. Indeed, if that were not the case, we would get
    \[
d\left(x,x_1\right)\leq d\left(x,x_0\right)+d\left(x_0,x_1\right)<\eta r/4+\eta r/4=\eta r/2,
\]
    which leads to a contradiction with the choice of \(x_1\).

    Case I: \(d\left(x,x_0\right)\geq \eta r/4.\) Put \(\Tilde{x}=x\) and \(\Tilde{r}=\eta r/20\). Then for \(y\in B\left(\Tilde{x},\Tilde{r}\right)\), we get
    \[d\left(x,y\right)=d\left(\Tilde{x},y\right)<\eta r/20<r.\]
    Furthermore
    \[d\left(\tilde{x},x_0\right)=d\left(x,x_0\right)\geq \eta r/4>4\eta r/20=4\Tilde{r}.\]
    Therefore  \(B\left(\Tilde{x}, \Tilde{r}\right)\subset B\left(x,r\right)\) and \(x_0\notin B\left(\Tilde{x}, 4\Tilde{r}\right)\).

    Case II: \(d\left(x_1,x_0\right)\geq \eta r/4.\) Put \(\Tilde{x}=x_1\) and \(\Tilde{r}= \eta r/ 20\). 
	Then for  \(y\in B\left(\Tilde{x},\Tilde{r}\right)\), we have
    \[d\left(x,y\right)\leq d\left(x,x_1\right)+d\left(\tilde{x},y\right)<r/2+\eta r/20<r.\]
    Moreover
    \[d\left(\tilde{x},x_0\right)=d\left(x_1,x_0\right)\geq \eta r/4>4\eta r/20=4\Tilde{r}.\]
    Hence \(B\left(\Tilde{x}, \Tilde{r}\right)\subset B\left(x,r\right)\) and \(x_0\notin B\left(\Tilde{x}, 4\Tilde{r}\right)\).
\end{proof}
The triplet $(X, d, \mu)$ will be called a \texttt{metric-measure space} if $(X, d)$ is a metric space and $\mu$ is a Borel measure on $X$ such that $\mu(B(x,r)) \in (0, \infty)$ for any $x \in X$ and any $r \in (0, \infty)$. It is well known (see \cite{DG, G}) that metric-measure spaces are separable.

Given a metric-measure space $(X, d, \mu)$, the measure $\mu$ is said to be \texttt{doubling} provided there exists a constant $C_{\mu} \in (0,\infty)$ such that
\[
\mu(2B) \leq C_{\mu} \mu(B)\, \text{for all balls}\, B \subset  X.
\]

Let  $(X, d, \mu)$ be a metric-measure space and $Q>0$. We shall say that the measure $\mu$ is \texttt{lower $Q$-regular} if there is a constant $C_Q>0$ such that 
\begin{equation*} \label{regular}
\mu(B(z,r)) \geq C_Q^{-1} r^Q \mbox{ for } r \in (0, 3 \diam X), \,\, z\in X.
\end{equation*}
We shall say that the measure $\mu$ is \texttt{upper $Q$-regular} if there is a constant $C_Q>0$ such that 
\begin{equation*} \label{regular}
\mu(B(z,r)) \leq C_Q r^Q \mbox{ for } r \in (0, 3 \diam X), \,\, z\in X.
\end{equation*}
Furthermore, we shall say that the measure $\mu$ is \texttt{Ahlfors $Q$-regular} if there is a constant $C_Q\geq 1$ such that 
\begin{equation*} \label{regular}
C_Q^{-1} r^Q \leq \mu(B(z,r)) \leq C_Q r^Q \mbox{ for } r \in (0, 3 \diam X), \,\, z\in X.
\end{equation*}
It is easy to convince oneself that if  $\mu$ is Ahlfors $Q$-regular measure on a metric-measure space $(X, d, \mu)$, then $(X, d)$ is $\eta$-uniformly perfect for every $\eta \in \left(0, C_Q^{-2/Q}\right)$ and $\mu$ is doubling with $C_{\mu}=C_Q^2 2^Q$.

Recall that a Borel measure $\mu$ is said to be Borel regular, if for every measurable set $E$, there exists a Borel set $B$ such that $E \subset B$ and $\mu(B)=\mu(E)$.  
Let us stress that in \cite{AGS} the characterizations of the Lusin's theorem and the existence of Borel representatives are given via the regularity properties of the measure. On the other hand, 
in light of the following example, we see that Ahlfors regular measure need not be necessarily Borel regular. 
\begin{ex} \label{Paul}
	There exists a measure space $(\mathbb{R}^n, \mathcal{F}, \nu)$ such that\footnote{By $\lambda$ we denote the Lebesgue measure on $\mathbb{R}^n$ and by $\mathcal{L}_n$ we denote the $\sigma$-algebra of Lebesgue measurable subsets of $\mathbb{R}^n$.} $\mathcal{L}_n \subset \mathcal{F}$, $\nu|_{\mathcal{L}_n}=\lambda$ and $\nu$ is not Borel regular.
\end{ex}
\begin{proof}
Let $V\subset [0,1]^n$ be a non Lebesgue measurable set such that the family \(\left\{V  +q(1,0,...,0)\right\}_{q\in\mathbb{Q}\cap\left(-1,1\right)}\) is pairwise disjoint. Since for $q\in\mathbb{Q}\cap\left(-1,1\right)$ we have $V  +q(1,0,...,0) \subset [-1,2]\times [0,1]^{n-1}$, inner measure is superadditive and the inner Lebesgue measure $\lambda_*$ is translation invariant, we get $\lambda_*(V)=0$. Let $C=\mathbb{R}^n \setminus V$ and let us define the outside measure $\nu$ on $\mathbb{R}^n$ as follows
\[
	\nu(F) = \lambda^* (C \cap F) \, \text{for} \, F \subset \mathbb{R}^n.
\]
One can easily see that\footnote{By $ \mathcal{M}(\nu)$ we denote the Carath\'{e}odory $\sigma$-algebra.} $ \mathcal{L}_n \subset  \mathcal{M}(\nu):=\mathcal{F} $. Moreover, since $\lambda_*(\mathbb{R}^n \setminus C)=0$, by \cite{S} we have
\[
	\nu(F) = \lambda^* (C \cap F)=\lambda (F) \, \text{for} \, F \in \mathcal{L}_n.
\]
Finally, since $C \notin \mathcal{L}_n$, we get that $\nu$ is not Borel regular (see Lemma 3.3.13 in \cite{HKST} for similar considerations).
\end{proof}

\begin{lem}\label{lem:auxLem}
        Let \(\left(X,d,\mu\right)\) be a metric-measure space such that \(\left(X,d\right)\) is compact and \(\eta\)-uniformly perfect for some $\eta \in (0,1)$. If the measure $\mu$ is not lower \(Q\)-regular, then there exist sequences \(\left(x_n\right)_{n=1}^{\infty}\subseteq X\), \(\left(r_n\right)_{n=1}^{\infty}\subseteq \left(0,3\diam X\right)\) such that
        \begin{enumerate}
            \item \label{it:a} the sequence \(\left(r_n\right)_{n=1}^{\infty}\) is decreasing and convergent to 0;
            \item \label{it:b} the sequence \(\left(x_n\right)_{n=1}^{\infty}\) converges to a point \(x_0\in X\);
            \item \label{it:c} for all \(n\in\mathbb{Z}_+\) holds that
            \[\mu\left(B\left(x_n,r_n\right)\right)\leq C10^{-n}r_n^Q,\]
            with \(C=20^Q\eta^{-Q};\)
            \item \label{it:d} for any \(m,n\in\mathbb{Z}_+\) such that \(m>n\) the following equality holds
            \[B\left(x_m,r_m\right)\cap B\left(x_n,3r_n\right)=\emptyset,\]
            in particular the family of balls \(\left(B\left(x_n,r_n\right)\right)_{n=1}^{\infty}\) is pairwise disjoint;
            \item \label{it:e} for all \(n\in\mathbb{Z}_+\) we have
            \[X\setminus B\left(x_n,r_n\right)\neq \emptyset.\]
        \end{enumerate}
    \end{lem}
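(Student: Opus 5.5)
Since $\mu$ is not lower $Q$-regular, for every $k$ there are $z\in X$ and $\rho\in(0,3\diam X)$ with $\mu(B(z,\rho))< k^{-1}\rho^Q$. I first argue that such radii can be taken arbitrarily small. If the bad radii were bounded below by some $\delta>0$, compactness would give a contradiction. Indeed, for $\rho\ge\delta$ we have $\mu(B(z,\rho))\ge \mu(B(z,\delta))$. Moreover $z\mapsto\mu(B(z,\delta))$ is bounded below by a positive constant: cover $X$ by finitely many balls $B(y_i,\delta/2)$; every $z$ lies in some $B(y_i,\delta/2)$, so $B(z,\delta)\supset B(y_i,\delta/2)$ and $\mu(B(z,\delta))\ge\min_i\mu(B(y_i,\delta/2))=:m>0$. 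Since $\rho^Q<(3\diam X)^Q$, this gives $\mu(B(z,\rho))\ge m\ge m(3\diam X)^{-Q}\rho^Q$, which rules out arbitrarily small ratios. Hence for every $k$ and every $\delta>0$ there is a bad ball $B(z,\rho)$ with $\rho<\delta$ and $\mu(B(z,\rho))<k^{-1}\rho^Q$. By compactness, a sequence of such bad centres $z_k$ with $\rho_k\to0$ and ratio $\to0$ has a convergent subsequence, $z_k\to x_0$.

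The bad balls $B(z_k,\rho_k)$ may contain or approach $x_0$, which would spoil the disjointness in (d). I fix this with Lemma \ref{lem:goodBall} applied with this $x_0$. For $\rho_k<\diam X/2$, say, we have $X\setminus B(z_k,\rho_k)\ne\emptyset$. The lemma then gives $\tilde x_k$ with $B(\tilde x_k,\tilde\rho_k)\subset B(z_k,\rho_k)$ and $x_0\notin B(\tilde x_k,4\tilde\rho_k)$, where $\tilde\rho_k=\eta\rho_k/20$. Then
\[
\mu(B(\tilde x_k,\tilde\rho_k))\le\mu(B(z_k,\rho_k))<k^{-1}\rho_k^Q=k^{-1}20^Q\eta^{-Q}\tilde\rho_k^Q,
\]
which explains the constant $C=20^Q\eta^{-Q}$ in (c). Also $d(\tilde x_k,z_k)<\rho_k\to0$, so $\tilde x_k\to x_0$, and $d(\tilde x_k,x_0)\ge4\tilde\rho_k$. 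Property (e) holds since $\tilde\rho_k<\rho_k$ is small compared with $\diam X$.

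Now I extract the sequence inductively. Set $x_1,r_1$ to be a good ball with ratio below $10^{-1}$. Having chosen $x_n,r_n$, the ball $B(x_n,3r_n)$ satisfies $d(x_n,x_0)\ge4r_n$, so $B(x_0,r_n)\cap B(x_n,3r_n)=\emptyset$. I then pick the next index $k$ large enough that four conditions hold: $k\ge10^{n+1}$; $\tilde\rho_k<r_n$, and also smaller than a fixed decreasing scale so that $r_n\to0$; and $d(\tilde x_k,x_0)+\tilde\rho_k<\min_{j\le n}r_j$. The last condition gives $B(\tilde x_k,\tilde\rho_k)\subset B(x_0,r_j)$, which is disjoint from $B(x_j,3r_j)$ for all $j\le n$. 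This is possible because $\tilde x_k\to x_0$ and $\tilde\rho_k\to0$. Setting $x_{n+1}=\tilde x_k$ and $r_{n+1}=\tilde\rho_k$ yields (a)–(e); disjointness for $m>n$ follows from the condition imposed at step $m$.

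The main obstacle is the first step, that bad radii can be made arbitrarily small; the compactness covering argument above handles it. The rest is bookkeeping.
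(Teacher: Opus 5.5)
Your argument is correct and follows the same route as the paper. Both use bad balls with ratio tending to $0$, compactness to produce $x_0$ with radii shrinking to $0$, Lemma \ref{lem:goodBall} to push each ball away from $x_0$ at the cost of the factor $20^Q\eta^{-Q}$, and an inductive choice of the next ball inside $B(x_0,r_n)$, which is disjoint from $B(x_n,3r_n)$. The differences are only in details. You force the radii to $0$ with a finite-cover lower bound on $\inf_z\mu(B(z,\delta))$, where the paper uses a Fatou (lower semicontinuity) argument. You ensure $X\setminus B(z_k,\rho_k)\neq\emptyset$ with the bound $\rho_k<\diam X/2$, where the paper uses the measure condition $\rho_1<(10\mu(X))^{1/Q}$.
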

    \begin{proof}
       Since the measure $\mu$ is not lower $Q$-regular, there exist \(\left(y_n\right)_{n=1}^{\infty}\subseteq X\) and \(\left(\rho_n\right)_{n=1}^{\infty}\subseteq\left(0, 3\diam X\right)\) 
such that for $n\in \mathbb{Z}_+$ one has
        	\[
		\mu\left(B\left(y_n,\rho_n\right)\right)<10^{-n}\rho_n^Q.
	\]
        From the compactness, we may assume that there exist \(x_0\in X, \rho_0\in\left[0,3\diam X\right]\) such that \(y_n\underset{n\to\infty}{\longrightarrow}x_0\), \(\rho_n\underset{n\to\infty}{\longrightarrow}\rho_0\). Let us observe that \(\rho_0=0\). Indeed, if $\rho_0 >0$, then we would have
        \[
		0<\mu\left(B\left(x_0,\rho_0\right)\right)\leq\liminf_{n\to\infty}\mu\left(B\left(y_n,\rho_n\right)\right)\leq\lim_{n\to\infty}10^{-n}\rho_n^Q=0,
	\]
        which is a contradiction\footnote{From the Fatou Lemma we have $\mu\left(B\left(x_0,(1-\epsilon)\rho_0\right)\right)\leq\liminf_{n\to\infty}\mu\left(B\left(y_n,\rho_n\right)\right)$ for any $\epsilon \in (0,1)$. Therefore, the inequality $\mu\left(B\left(x_0,\rho_0\right)\right)\leq\liminf_{n\to\infty}\mu\left(B\left(y_n,\rho_n\right)\right)$ holds.}. Furthermore, we may assume that the sequence \(\left(\rho_n\right)_{n=1}^{\infty}\) is decreasing and \(\rho_1<\left(10\mu\left(X\right)\right)^{\frac{1}{Q}}\). From that, for any \(n\in\mathbb{Z}_+\) we have 
        \[\mu\left(X\setminus B\left(y_n,\rho_n\right)\right)=\mu\left(X\right)-\mu\left(B\left(y_n,\rho_n\right)\right)>\mu\left(X\right)-10^{-n}\rho_n^Q\geq\mu\left(X\right)-10^{-1}\rho_1^Q>0.\]
        So \(X\setminus B\left(y_{n},\rho_{n}\right)\neq\emptyset\).

        Let \(n_1=1\). From Lemma \ref{lem:goodBall} we get \(x_1\in X, r_1>0\) such that \(B\left(x_1, r_1\right)\subseteq B\left(y_{n_1},\rho_{n_1}\right), r_1=\frac{\eta}{20}\rho_{n_1}\) and \(x_0\notin B\left(x_1,4r_1\right)\). Then we have that \(B\left(x_0,r_1\right)\cap B\left(x_1, 3r_1\right)=\emptyset\). 

        Let \(n_2\geq 2> n_1\) be such that for \(n\geq n_2\) the inclusion \(B\left(y_n, \rho_n\right)\subseteq B\left(x_0,r_1\right)\) holds. By Lemma \ref{lem:goodBall} we get \(x_2\in X, r_2>0\) such that \(B\left(x_2, r_2\right)\subseteq B\left(y_{n_2},\rho_{n_2}\right)\subseteq B\left(x_0, r_1\right), r_2=\frac{\eta}{20}\rho_{n_2}\), and \(x_0\notin B\left(x_2,4r_2\right)\). We see that \(B\left(x_0,r_2\right)\cap B\left(x_2, 3r_2\right)=\emptyset\). 

        In that way we construct inductively the sequences \(\left(x_l,r_l,n_l\right)_{l=1}^{\infty} \subset X \times (0,\infty)\times \mathbb{Z}_{+}\) such that $ \left(n_l\right)_{l=1}^{\infty}$ is increasing and for $l\geq 2$ we have
\begin{itemize}
	\item $B(x_l, r_l)\subset B(y_{n_l}, \rho_{n_l})\subset B(x_0, r_{l-1});$
	\item $r_l = \frac{\eta}{20}\rho_{n_l}, \, x_0 \notin B(x_l, 4r_l);$
	\item $ B\left(x_0,r_l\right)\cap B\left(x_l, 3r_l\right)=\emptyset.$
\end{itemize}
Now, having in mind the above properties, it is not difficult to see that $(x_n)_{n=1}^{\infty}$ and $(r_n)_{n=1}^{\infty}$ satisfy conditions $\eqref{it:a}-\eqref{it:e}$.
    \end{proof}

\subsection{Function spaces}
Here and in what follows, the integral average is denoted by
\[
f_A =\fint_A f d \mu=\frac{1}{\mu(A)}\int_A f d \mu,
\]
where $A\subset X$ is a measurable set of finite positive measure and $f$ is integrable over $A$.

    Let \(\left(X,d,\mu\right)\) be a metric-measure space, \(p\in\left[1,\infty\right)\), and \(\lambda\in\left[0,\infty\right)\). We denote by \(L^{p,\lambda}_M\left(X,\mu\right)\) a set of functions \(f\in L^p\left(X\right)\) such that
         \[\left\|f\right\|_{L^{p,\lambda}_M\left(X,\mu\right)}:=\sup_{x\in X, r>0}\left(\frac{1}{r^{\lambda}}\int_{B(x,r)}\left|f\right|^p\dd\mu\right)^{\frac{1}{p}}<\infty.\]
    The set \(L^{p,\lambda}_M\left(X,\mu\right)\) with a norm defined by \(\left\|\cdot\right\|_{L^{p,\lambda}_M\left(X,\mu\right)}\)is called \texttt{Morrey space}.

Furthermore, we denote by \(L^{p,\lambda}_C\left(X, \mu\right)\) a set of functions \(f\in L^p\left(X\right)\) such that
    \[\left[f\right]_{L^{p,\lambda}_C\left(X,\mu\right)}:=\sup_{x\in X, r>0}\left(\frac{1}{r^{\lambda}}\int_{B(x,r)}\left|f-f_{B(x,r)}\right|^p\dd\mu\right)^{\frac{1}{p}}<\infty.\]
    The set \(L^{p,\lambda}_C\left(X, \mu\right)\) with a norm defined by \(\left\|\cdot\right\|_{L^{p,\lambda}_C\left(X,\mu\right)}=\left\|\cdot\right\|_{L^p\left(X,\mu\right)}+\left[\cdot\right]_{L^{p,\lambda}_C\left(X,\mu\right)}\) is called \texttt{Campanato space}.

    Let \(\left(X,d\right)\) be a metric space and \(\alpha \in (0, \infty)\). We denote by \(C^{0,\alpha}\left(X\right)\) a set of continuous and bounded functions \(f:X\to \mathbb{R}\) such that
    \[\left[f\right]_{C^{0,\alpha}\left(X\right)}:=\sup_{x,y\in X, x\neq y}\frac{\left|f\left(x\right)-f\left(y\right)\right|}{d\left(x,y\right)^{\alpha}}<\infty.\]
    The set \(C^{0,\alpha}\left(X\right)\) with a norm defined by 
    \[\left\|f\right\|_{C^{0,\alpha}\left(X\right)}=\sup_{x\in X}\left|f\left(x\right)\right|+\sup_{x,y\in X, x\neq y}\frac{\left|f\left(x\right)-f\left(y\right)\right|}{d\left(x,y\right)^{\alpha}}\]
    is called \texttt{H\"{o}lder space}.

The Campanato and the H\"{o}lder space are Banach spaces. On the other hand, it is easy to see that \(\left\|\cdot\right\|_{L^{p,\lambda}_M\left(X,\mu\right)}\) is a norm and that for bounded metric spaces the Morrey space is a Banach space. 
\begin{ex}
    Let $((1,\infty), |.|, \lambda)$ and let $f_n : (1, \infty) \rightarrow \mathbb{R}$ be the following sequence $f_n (y) = 1/y^{1+1/n}$ and let \(f\left(y\right)=1/y\). Then for $x \in (1,\infty)$, $r \in (0,\infty)$ we have
    \begin{align*}
        \frac{1}{\sqrt{r}}\int_{\max\left\{x-r,1\right\}}^{x+r}\left|\frac{1}{y}-\frac{1}{y^{1+1/n}}\right|\dd y&=\frac{1}{\sqrt{r}}\int_{\max\left\{x-r,1\right\}}^{x+r}\int_{-1-1/n}^{-1}\ln yy^t\dd t\dd y\leq \frac{1}{\sqrt{r}}\int_{\max\left\{x-r,1\right\}}^{x+r}\frac{1}{n}\ln yy^{-1}\dd y\\
               &=\frac{1}{2n\sqrt{r}}\left(\left(\ln\left(x+r\right)\right)^2-\left(\ln\left(\max\left\{x-r,1\right\}\right)\right)^2\right).
    \end{align*}
Since the function \(\frac{1}{\sqrt{r}}\left(\left(\ln\left(x+r\right)\right)^2-\left(\ln\left(\max\left\{x-r,1\right\}\right)\right)^2\right)\) is bounded we see that 
\[\left\|f-f_n\right\|_{L^{p,\lambda}_M\left(X,\mu\right)}\xrightarrow[n\to\infty]{}0.\]
Therefore, since \(f\notin L^1\left(X,\mu\right)\), we get that $L^{1,1/2}_M\left(X,\mu\right)$ is not a Banach space.
\end{ex}

\begin{lem}\label{lem:equivOfNorms}
    Let \(\left(X,d, \mu\right)\) be a metric-measure space and let \(p\in\left[1,\infty\right), \lambda\in\left[0,\infty\right)\). For \(f\in L^p\left(X,\mu\right)\) let us define the following quantity
    \[\vertiii{f}_{p,\lambda}:=\sup_{x\in X,r>0}\left(\frac{1}{r^\lambda}\inf_{c\in\mathbb{R}}\int_{B\left(x,r\right)}\left|f-c\right|^p\dd\mu\right)^{\frac{1}{p}}.\]
Then
\begin{enumerate}
\item For \(f\in L^p\left(X,\mu\right)\) we have
    \[\vertiii{f}_{p,\lambda}\leq\left[f\right]_{L^{p,\lambda}_C\left(X,\mu\right)}\leq2\vertiii{f}_{p,\lambda}.\]
\item \(\vertiii{\cdot}_{p,\lambda}\) is a seminorm on \(L^{p,\lambda}_C\left(X,\mu\right)\).
\end{enumerate}
    \end{lem}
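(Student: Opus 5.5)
The proof will be done ball by ball: I will fix an arbitrary ball $B=B(x,r)$ and compare the quantities $\inf_{c\in\mathbb{R}}\int_B|f-c|^p\dd\mu$ and $\int_B|f-f_B|^p\dd\mu$. Multiplying by $r^{-\lambda}$, taking $p$-th roots and the supremum over $x\in X$, $r>0$ will then give both inequalities in (1). Note that $f\in L^p(X,\mu)$ and $\mu(B)\in(0,\infty)$ imply $f\in L^1(B)$ by H\"older's inequality, so $f_B$ is well defined.

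For the lower bound, I will use that $c=f_B$ is an admissible constant. This gives $\inf_c\int_B|f-c|^p\dd\mu\leq\int_B|f-f_B|^p\dd\mu$ immediately, and hence $\vertiii{f}_{p,\lambda}\leq[f]_{L^{p,\lambda}_C(X,\mu)}$.

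For the upper bound, I will fix any $c\in\mathbb{R}$ and use Minkowski's inequality in $L^p(B)$:
\[
\Big(\int_B|f-f_B|^p\dd\mu\Big)^{1/p}\leq\Big(\int_B|f-c|^p\dd\mu\Big)^{1/p}+|c-f_B|\,\mu(B)^{1/p}.
\]
Next I will write $|c-f_B|=\big|\fint_B(c-f)\dd\mu\big|$ and bound it by $\fint_B|f-c|\dd\mu\leq\big(\fint_B|f-c|^p\dd\mu\big)^{1/p}$, using Jensen's (or H\"older's) inequality. It follows that $|c-f_B|\,\mu(B)^{1/p}\leq\big(\int_B|f-c|^p\dd\mu\big)^{1/p}$. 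So the left-hand side is at most $2\big(\int_B|f-c|^p\dd\mu\big)^{1/p}$, and taking the infimum over $c$ gives the factor $2$ per ball. This Jensen step is the only point requiring any care; everything else is bookkeeping.

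For (2), I will first note that $\vertiii{f}_{p,\lambda}$ is finite on $L^{p,\lambda}_C(X,\mu)$, which follows from (1). Nonnegativity is clear. Absolute homogeneity holds because $\inf_c\int_B|tf-c|^p\dd\mu=|t|^p\inf_c\int_B|f-c|^p\dd\mu$: for $t\neq0$ this follows by substituting $c=tc'$, and for $t=0$ both sides vanish. For the triangle inequality, I will fix $c_1,c_2$ and apply Minkowski's inequality:
\[
\Big(\int_B|f+g-(c_1+c_2)|^p\dd\mu\Big)^{1/p}\leq\Big(\int_B|f-c_1|^p\dd\mu\Big)^{1/p}+\Big(\int_B|g-c_2|^p\dd\mu\Big)^{1/p}.
\]
The left-hand side dominates $\big(\inf_c\int_B|f+g-c|^p\dd\mu\big)^{1/p}$, so taking infima over $c_1$ and $c_2$ separately, multiplying by $r^{-\lambda/p}$, and then taking suprema (the supremum of a sum being at most the sum of suprema) yields $\vertiii{f+g}_{p,\lambda}\leq\vertiii{f}_{p,\lambda}+\vertiii{g}_{p,\lambda}$.
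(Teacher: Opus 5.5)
The paper states this lemma without proof, treating it as standard, so there is no argument of theirs to diverge from. Your proof is correct and complete, and it is the standard one: the choice $c=f_B$ gives $\vertiii{f}_{p,\lambda}\le [f]_{L^{p,\lambda}_C(X,\mu)}$, and Minkowski plus Jensen, in the form $|c-f_B|\,\mu(B)^{1/p}\le\bigl(\int_B|f-c|^p\dd\mu\bigr)^{1/p}$, gives the factor $2$. The substitution $c=tc'$ handles homogeneity, and Minkowski with the constant $c_1+c_2$ handles the triangle inequality.

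Because you argue ball by ball, the inequalities in (1) hold in $[0,\infty]$ for every $f\in L^p(X,\mu)$. This is the form the paper needs. In Theorem \ref{thm:embMorrCampLeb} ii) it uses $[f]\le 2\vertiii{f}$. At the end of the proof of Theorem \ref{dolne} it concludes $f\in L^{p,\lambda}_C(X,\mu)$ from a bound on $\vertiii{f}_{p,\lambda}$ alone.
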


\subsection{Auxiliary results}
\begin{lem}\label{lem:average_rs}
    Let \(\left(X,d,\mu\right)\) be a metric-measure space and \(p\in \left[1,\infty\right), \lambda\in\left[0,\infty\right)\). Then for  \(f\in L^{p,\lambda}_C\left(X,\mu\right), x\in X\) and \(0<r<s\) we have 
\[
	\left|f_{B(x,r)}-f_{B(x,s)}\right|\leq 2^{1-\frac{1}{p}}\frac{\left(r^{\lambda}+s^{\lambda}\right)^{\frac{1}{p}}}{\left(\mu\left(B(x,r)\right)\right)^{\frac{1}{p}}}			\left[f\right]_{L^{p,\lambda}_C\left(X,\mu\right)}.\
\]
\end{lem}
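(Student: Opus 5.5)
The plan is to compare both averages with the single constant $f_{B(x,r)}$, integrating only over the small ball $B(x,r)$, since $B(x,r)\subset B(x,s)$. Write $B_r=B(x,r)$ and $B_s=B(x,s)$. Because $f_{B_r}-f_{B_s}$ is a constant, we have
\[
\left|f_{B_r}-f_{B_s}\right| = \left(\fint_{B_r}\left|f_{B_r}-f_{B_s}\right|^p\dd\mu\right)^{\frac1p},
\]
and pointwise on $B_r$
\[
\left|f_{B_r}-f_{B_s}\right|\le \left|f-f_{B_r}\right|+\left|f-f_{B_s}\right|.
\]

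The constant $2^{1-\frac1p}$ and the $p$-th root of a sum suggest the convexity inequality $(a+b)^p\le 2^{p-1}(a^p+b^p)$ rather than Minkowski. Applying it gives
\[
\left|f_{B_r}-f_{B_s}\right|^p\le \frac{2^{p-1}}{\mu(B_r)}\left(\int_{B_r}\left|f-f_{B_r}\right|^p\dd\mu+\int_{B_r}\left|f-f_{B_s}\right|^p\dd\mu\right).
\]

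Each integral is then bounded by the Campanato seminorm:
\begin{itemize}
\item The first integral is at most $r^\lambda [f]^p_{L^{p,\lambda}_C(X,\mu)}$, directly from the definition of the seminorm.
\item For the second, enlarge the domain from $B_r$ to $B_s$ (the integrand is nonnegative). The result is at most $s^\lambda [f]^p_{L^{p,\lambda}_C(X,\mu)}$.
\end{itemize}

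Taking $p$-th roots then yields exactly the claimed bound
\[
2^{1-\frac1p}\,\frac{\left(r^\lambda+s^\lambda\right)^{\frac1p}}{\left(\mu(B_r)\right)^{\frac1p}}\,[f]_{L^{p,\lambda}_C(X,\mu)}.
\]

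There is no real obstacle. The only points to check are finiteness and positivity of $\mu(B_r)$, which holds by the standing assumption on metric-measure spaces, and integrability of $f$ on balls, which holds since $f\in L^p$ and balls have finite measure, so the averages are defined.
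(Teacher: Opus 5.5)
Your proposal is correct and follows essentially the same route as the paper. The paper likewise integrates the constant $|f_{B(x,r)}-f_{B(x,s)}|^p$ over $B(x,r)$, applies $(a+b)^p\le 2^{p-1}(a^p+b^p)$, bounds the first term by $r^\lambda[f]^p_{L^{p,\lambda}_C(X,\mu)}$, and enlarges the domain of the second integral to $B(x,s)$ to bound it by $s^\lambda[f]^p_{L^{p,\lambda}_C(X,\mu)}$.
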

\begin{proof}
	Since
    \begin{align*}
        \int_{B(x,r)}\left|f_{B(x,r)}-f_{B(x,s)}\right|^p\dd\mu\leq 2^{p-1}\left(\int_{B(x,r)}\left|f_{B(x,r)}-f\right|^p\dd\mu+\int_{B(x,r)}\left|f-f_{B(x,s)}\right|^p\dd\mu\right),
    \end{align*}
    we get
    \begin{align*}
        \mu\left(B(x,r)\right)\left|f_{B(x,r)}-f_{B(x,s)}\right|^p&\leq 2^{p-1}\left(\int_{B(x,r)}\left|f_{B(x,r)}-f\right|^p\dd\mu+\int_{B(x,r)}\left|f-f_{B(x,s)}\right|^p\dd\mu\right)\\
        &\leq2^{p-1}\left(\int_{B(x,r)}\left|f_{B(x,r)}-f\right|^p\dd\mu+\int_{B(x,s)}\left|f-f_{B(x,s)}\right|^p\dd\mu\right)\\
        &\leq2^{p-1}\left(r^{\lambda}+s^{\lambda}\right)\left[f\right]_{L^{p,\lambda}_C\left(X,\mu\right)}^p.
    \end{align*}
   Thus,
     \[\left|f_{B(x,r)}-f_{B(x,s)}\right|\leq 2^{1-\frac{1}{p}}\frac{\left(r^{\lambda}+s^{\lambda}\right)^{\frac{1}{p}}}{\left(\mu\left(B(x,r)\right)\right)^{\frac{1}{p}}}\left[f\right]_{L^{p,\lambda}_C\left(X,\mu\right)}.\]
\end{proof}
\begin{lem}\label{lem:averages_k}
    Let \(\left(X,d,\mu\right)\) be a metric-measure space such that the measure \(\mu\) is lower \(Q\)-regular, where $Q \in (0,\infty)$. Let \(p\in\left[1,\infty\right), \, \lambda\in\left[0,\infty\right)\setminus\left\{Q\right\}\). Then for \(f\in L^{p,\lambda}_C\left(X, \mu\right)\), \(x\in X\), \(r\in\left(0, 3\diam X\right)\), and \(k\in\mathbb{N}\) the following inequality holds
    \[
	\left|f_{B(x,r)}-f_{B(x,r/2^k)}\right|\leq Cr^{\frac{\lambda-Q}{p}}\left(2^{\frac{\left(Q-\lambda\right)k}{p}}-1\right)\left[f\right]_{L^{p,\lambda}_C\left(X,\mu\right)},
\]
    where \(C=2^{1+\frac{Q-1}{p}}\left(1+2^{-\lambda}\right)^{\frac{1}{p}}C_Q^{\frac{1}{p}}\left(2^{\frac{\left(Q-\lambda\right)}{p}}-1\right)^{-1}.\)
\end{lem}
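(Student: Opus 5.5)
The plan is a telescoping argument along the dyadic radii $r, r/2, \dots, r/2^k$. Each consecutive pair is controlled by Lemma \ref{lem:average_rs}, and lower $Q$-regularity turns the measure of the smaller ball into a power of the radius. For $k=0$ both sides vanish, so assume $k\ge 1$. By the triangle inequality,
\[
\left|f_{B(x,r)}-f_{B(x,r/2^k)}\right|\leq \sum_{j=0}^{k-1}\left|f_{B(x,r/2^{j+1})}-f_{B(x,r/2^{j})}\right|.
\]

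Fix $j\in\{0,\dots,k-1\}$ and apply Lemma \ref{lem:average_rs} with the smaller radius $r/2^{j+1}$ and the larger radius $s=r/2^{j}$. The numerator becomes
\[
\left((r/2^{j+1})^{\lambda}+(r/2^{j})^{\lambda}\right)^{1/p}=(r/2^{j})^{\lambda/p}\left(1+2^{-\lambda}\right)^{1/p}.
\]
Since $r/2^{j+1}<3\diam X$, lower $Q$-regularity applies and gives $\mu(B(x,r/2^{j+1}))\geq C_Q^{-1}(r/2^{j+1})^Q$. Hence the $j$-th term is at most
\[
2^{1-\frac1p}(1+2^{-\lambda})^{\frac1p}C_Q^{\frac1p}\,2^{\frac{Q}{p}}\,(r/2^{j})^{\frac{\lambda-Q}{p}}\left[f\right]_{L^{p,\lambda}_C(X,\mu)} .
\]
The constant in front equals $2^{1+\frac{Q-1}{p}}(1+2^{-\lambda})^{1/p}C_Q^{1/p}$, which matches the constant in the statement apart from the geometric-series factor.

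Summing over $j$ leaves the factor $r^{\frac{\lambda-Q}{p}}\sum_{j=0}^{k-1}q^{j}$ with $q=2^{\frac{Q-\lambda}{p}}$. Because $\lambda\neq Q$, we have $q\neq 1$, and
\[
\sum_{j=0}^{k-1}q^j=\frac{q^k-1}{q-1}.
\]
This produces exactly the factor $\left(2^{\frac{(Q-\lambda)k}{p}}-1\right)\left(2^{\frac{Q-\lambda}{p}}-1\right)^{-1}$ in the claimed inequality.

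If $\lambda>Q$, both $q^k-1$ and $q-1$ are negative, so the quotient, and the stated constant's sign convention, still give a nonnegative bound. It is worth checking that the statement is meant with this interpretation. The case $\lambda=Q$ is excluded precisely because there the sum equals $k$ rather than a geometric expression.

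The only real care needed is the bookkeeping of the powers of $2$ so that the constant matches precisely. The analytic content is entirely contained in Lemma \ref{lem:average_rs} and the lower regularity bound.
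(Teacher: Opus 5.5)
Your proposal is correct and is essentially the paper's proof: telescope over the dyadic radii, apply Lemma \ref{lem:average_rs} to each pair $(r/2^{j+1}, r/2^{j})$, bound $\mu(B(x,r/2^{j+1}))$ from below using lower $Q$-regularity (valid since $r/2^{j+1}<3\diam X$), and sum the geometric series with ratio $2^{(Q-\lambda)/p}$. Your sign remark for $\lambda>Q$ is also right: the stated $C$ and the factor $2^{(Q-\lambda)k/p}-1$ are then both negative, so the bound holds as written.
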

\begin{proof}
    The case when \(k=0\) is trivial, for \(k>0\) from Lemma \ref{lem:average_rs} we get
    \begin{align*}
        \left|f_{B(x,r)}-f_{B(x,r/2^k)}\right|&\leq \sum_{m=0}^{k-1}\left|f_{B(x,r/2^m)}-f_{B(x,r/2^{m+1})}\right|\\
        &\leq 2^{1-\frac{1}{p}}r^{\frac{\lambda}{p}}\sum_{m=0}^{k-1}\frac{\left(1+\left(2\right)^{-\lambda}\right)^{\frac{1}{p}}}{\left(2^{\lambda m}\mu\left(B(x,r/2^{m+1})\right)\right)^{\frac{1}{p}}}\left[f\right]_{L^{p,\lambda}_C\left(X,\mu\right)}\\
        &\leq2^{1-\frac{1}{p}}r^{\frac{\lambda}{p}}\left(1+2^{-\lambda}\right)^{\frac{1}{p}}\sum_{m=0}^{k-1}\frac{1}{\left(2^{\lambda m}C_Q^{-1}\left(r/2^{m+1}\right)^Q\right)^{\frac{1}{p}}}\left[f\right]_{L^{p,\lambda}_C\left(X,\mu\right)}\\
        &=2^{1+\frac{Q-1}{p}}r^{\frac{\lambda-Q}{p}}\left(1+2^{-\lambda}\right)^{\frac{1}{p}}C_Q^{\frac{1}{p}}\frac{2^{\frac{\left(Q-\lambda\right)k}{p}}-1}{2^{\frac{\left(Q-\lambda\right)}{p}}-1}\left[f\right]_{L^{p,\lambda}_C\left(X,\mu\right)}.
    \end{align*}
\end{proof}
\section{Main results}

\begin{tw}\label{thm:embMorrCampLeb}
    Let \(\left(X,d,\mu\right)\) be a metric-measure space and \(p\in\left[1,\infty\right)\), then
\begin{enumerate}
\item[i)]  $L^{p,0}_M\left(X,\mu\right)=L^{p,0}_C\left(X,\mu\right)=L^p\left(X,\mu\right)$
    and the norms are equivalent; 
\item[ii)] If \(\left(X,d\right)\) is bounded and $\lambda\in\left[0,\infty\right)$, then 
    \[L^{p,\lambda}_M\left(X,\mu\right) \xhookrightarrow{} L^{p,\lambda}_C\left(X,\mu\right).\]
\end{enumerate}
\end{tw}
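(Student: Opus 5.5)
For part i), both the Morrey and Campanato norms with $\lambda=0$ are compared directly with the $L^p$ norm. For the Morrey space, $r^0=1$ and $\int_{B(x,r)}|f|^p\,d\mu\le\|f\|_{L^p}^p$, so $\|f\|_{L^{p,0}_M}\le\|f\|_{L^p}$. For the reverse inequality, fix $x\in X$ and note that $X=\bigcup_{n}B(x,n)$. By monotone convergence, $\int_{B(x,n)}|f|^p\,d\mu\to\|f\|_{L^p}^p$, so the supremum equals $\|f\|_{L^p}$ and the two spaces coincide isometrically. The only subtlety is that $L^{p,0}_M$ is defined as a subset of $L^p$, which is immediate here.

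For the Campanato space with $\lambda=0$, the seminorm is controlled by Minkowski's and Jensen's inequalities:
\[
\Big(\int_{B}|f-f_B|^p\,d\mu\Big)^{1/p}\le \Big(\int_B|f|^p\,d\mu\Big)^{1/p}+\mu(B)^{1/p}|f_B|\le 2\Big(\int_B|f|^p\,d\mu\Big)^{1/p}\le 2\|f\|_{L^p}.
\]
This uses $|f_B|\le (\fint_B|f|^p\,d\mu)^{1/p}$. Hence $[f]_{L^{p,0}_C}\le 2\|f\|_{L^p}$, and so $\|f\|_{L^p}\le\|f\|_{L^{p,0}_C}\le 3\|f\|_{L^p}$. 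All three spaces are therefore equal as sets with equivalent norms.

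For part ii), the same estimate works ball by ball with the weight $r^{-\lambda}$:
\[
\Big(\frac{1}{r^\lambda}\int_{B(x,r)}|f-f_{B(x,r)}|^p\,d\mu\Big)^{1/p}\le 2\Big(\frac1{r^\lambda}\int_{B(x,r)}|f|^p\,d\mu\Big)^{1/p}\le 2\|f\|_{L^{p,\lambda}_M}.
\]
Taking the supremum gives $[f]_{L^{p,\lambda}_C}\le 2\|f\|_{L^{p,\lambda}_M}$.

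It remains to bound $\|f\|_{L^p}$ by the Morrey norm, and this is where boundedness is needed. Set $R=\diam X+1$, which is finite, and pick any $x\in X$. Then $B(x,R)=X$, so
\[
\|f\|_{L^p}^p=\int_{B(x,R)}|f|^p\,d\mu\le R^\lambda\|f\|_{L^{p,\lambda}_M}^p.
\]
Combining the two bounds gives $\|f\|_{L^{p,\lambda}_C}\le (R^{\lambda/p}+2)\|f\|_{L^{p,\lambda}_M}$, which is the continuous embedding.

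No step is a genuine obstacle. The only points needing care are the averaging inequality $|f_B|^p\le\fint_B|f|^p\,d\mu$ (Hölder's inequality) and, for ii), the choice of a single ball covering $X$ so that the $L^p$ part of the Campanato norm is controlled. That covering step is exactly where boundedness of $X$ is used, as the preceding example on $(1,\infty)$ illustrates.
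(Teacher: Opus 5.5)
Your proof is correct and follows essentially the same route as the paper. Part i) uses the identity of the $\lambda=0$ Morrey norm with the $L^p$ norm and the Minkowski--Jensen bound $[f]_{L^{p,0}_C}\le 2\|f\|_{L^p}$. Part ii) uses the ball-by-ball bound $[f]_{L^{p,\lambda}_C}\le 2\|f\|_{L^{p,\lambda}_M}$ together with covering $X$ by a single ball. The differences are cosmetic: the paper obtains the seminorm bound through the equivalent seminorm $\vertiii{f}_{p,\lambda}$ with $c=0$, and it uses radius $2\diam X$ where you use $\diam X+1$.
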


\begin{proof}
  $i)$ Firstly, let us observe that from the very definitions, we get that \(L^{p,0}_M\left(X,\mu\right)\subseteq L^p\left(X,\mu\right)\) and \(L^{p,0}_C\left(X,\mu\right)\subseteq L^p\left(X,\mu\right)\). We will show the opposite inclusions. Let \(f\in L^p\left(X,\mu\right)\). Then, 
\[
	\left\|f\right\|_{L^{p,0}_M\left(X,\mu\right)}=\sup_{x\in X, r>0}\left(\frac{1}{r^0}\int_{B(x,r)}\left|f\right|^p\dd\mu\right)^{\frac{1}{p}}=\left(\int_X\left|f\right|^p\dd\mu\right)^{\frac{1}{p}}=\left\|f\right\|_{L^p\left(X,\mu\right)}.
\]
    Moreover, by the Minkowski inequality and the Jensen inequality, we get
    \begin{align*}
        \left[f\right]_{L^{p,0}_C\left(X,\mu\right)}&=\sup_{x\in X, r>0}\left(\frac{1}{r^{0}}\int_{B(x,r)}\left|f-f_{B(x,r)}\right|^p\dd\mu\right)^{\frac{1}{p}}\\
               &\leq \left(\int_X\left|f\right|^p\dd\mu\right)^{\frac{1}{p}}+\sup_{x\in X, r>0}\left(\int_{B(x,r)}\left(\left|f\right|^p\right)_{B\left(x,r\right)}\dd\mu\right)^{\frac{1}{p}}\\
             &= 2\left(\int_X\left|f\right|^p\dd\mu\right)^{\frac{1}{p}}=2\left\|f\right\|_{L^p\left(X,\mu\right)}.
    \end{align*}
    Hence, we have 
    \[\left\|f\right\|_{L^p\left(X,\mu\right)}\leq \left\|f\right\|_{L^{p,0}_C\left(X,\mu\right)}=\left\|f\right\|_{L^p\left(X,\mu\right)}+\left[f\right]_{L^{p,0}_C\left(X,\mu\right)}\leq 3\left\|f\right\|_{L^p\left(X,\mu\right)}.\]

   $ii)$ Let \(f\in L^{p,\lambda}_M\left(X,\mu\right)\). From Lemma \ref{lem:equivOfNorms} we get
    \begin{align*}    
    \left\|f\right\|_{L^{p,\lambda}_C\left(X,\mu\right)}&=\left\|f\right\|_{L^p\left(X,\mu\right)}+\left[f\right]_{L^{p,\lambda}_C\left(X,\mu\right)}\leq 2\left(\left\|f\right\|_{L^p\left(X,\mu\right)}+\vertiii{f}_{p,\lambda}\right)\\
    &=2\left(\left\|f\right\|_{L^p\left(X,\mu\right)}+\left(\sup_{x\in X, r>0}\frac{1}{r^{\lambda}}\inf_{c\in\mathbb{R}}\int_{B(x,r)}\left|f-c\right|^p\dd\mu\right)^{\frac{1}{p}}\right)\leq 2\left(\left\|f\right\|_{L^p\left(X,\mu\right)}+\left\|f\right\|_{L^{p,\lambda}_M\left(X,\mu\right)}\right).
    \end{align*}
    To bound the norm \(\left\|f\right\|_{L^p\left(X,\mu\right)}\), let \(x_0\in X\), then we have
    \begin{align*}
        \left\|f\right\|_{L^p\left(X,\mu\right)}=\left(\int_X \left|f\right|^p\dd \mu\right)^{\frac{1}{p}}=\left(\int_{B(x_0,2\diam X)} \left|f\right|^p\dd \mu\right)^{\frac{1}{p}}\leq \left(2\diam X \right)^{\frac{\lambda}{p}}\left\|f\right\|_{L^{p,\lambda}_M\left(X,\mu\right)}.
    \end{align*}

\end{proof}
\begin{tw}\label{thm:embCampMorr}
    Let \(\left(X,d,\mu\right)\) be a bounded metric-measure space. 
    \begin{enumerate}
        \item \label{it:embCampMorr(i)}If the measure \(\mu\) is Ahlfors \(Q\)-regular, \(p\in\left[1,\infty\right)\) and \(\lambda\in \left[0,Q\right)\) then the following embedding holds
        \[
			L^{p,\lambda}_C\left(X,\mu\right)\xhookrightarrow{}L^{p,\lambda}_M\left(X,\mu\right).
	\]
        \item \label{it:embCampMorr(ii)}Let \(p\in\left[1,\infty\right)\) and \(\lambda \in (0, \infty)\). If the following embedding holds
        \[
		L^{p,\lambda}_C\left(X,\mu\right)\xhookrightarrow{}L^{p,\lambda}_M\left(X,\mu\right),
\]
        then the measure \(\mu\) is upper \(\lambda\)-regular.
    \end{enumerate}
\end{tw}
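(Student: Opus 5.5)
For part (i) we take $f\in L^{p,\lambda}_C(X,\mu)$, fix a ball $B=B(x,r)$, and estimate $\int_B|f|^p\,\dd\mu \le 2^{p-1}\bigl(\int_B|f-f_B|^p\,\dd\mu+\mu(B)|f_B|^p\bigr)$. The first term is at most $r^\lambda[f]^p_{L^{p,\lambda}_C}$, so everything reduces to controlling $\mu(B)|f_B|^p$ by $r^\lambda$ times a norm of $f$.

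\textbf{Large balls.} If $r\ge \diam X$, we have $B=X$ (or $\mu(B)=\mu(X)$). Since $X$ is bounded, $r^\lambda\ge(\diam X)^\lambda$, and $\int_X|f|^p\,\dd\mu\le (\diam X)^{-\lambda} r^\lambda\|f\|^p_{L^p}$. Here $\diam X>0$ because $X$ has at least two points.

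\textbf{Small balls.} For $r<\diam X$, choose $k\in\mathbb{N}$ with $2^{-1}\diam X\le 2^k r<\diam X$ and set $R=2^k r$. Then
\[
|f_{B(x,r)}|\le |f_{B(x,R)}|+|f_{B(x,R)}-f_{B(x,R/2^k)}|.
\]
Lower regularity and Hölder give $|f_{B(x,R)}|\le \mu(B(x,R))^{-1/p}\|f\|_{L^p}\le C_Q^{1/p}R^{-Q/p}\|f\|_{L^p}$, which is bounded by a constant depending on $\diam X$ times $\|f\|_{L^p}$. Lemma \ref{lem:averages_k}, applicable since $\lambda\ne Q$, bounds the second term by $CR^{(\lambda-Q)/p}\bigl(2^{(Q-\lambda)k/p}-1\bigr)[f]\le C r^{(\lambda-Q)/p}[f]$, using $\lambda<Q$. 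Upper regularity gives $\mu(B)\le C_Q r^Q$, hence
\[
\mu(B)|f_B|^p\lesssim r^Q\|f\|^p_{L^p}+r^\lambda[f]^p.
\]
Since $r<\diam X$ and $Q>\lambda$, we have $r^Q\le(\diam X)^{Q-\lambda}r^\lambda$. Combining the cases gives $\|f\|_{L^{p,\lambda}_M}\lesssim\|f\|_{L^{p,\lambda}_C}$. This step is routine once Lemma \ref{lem:averages_k} is available; the main point is to apply it with $\lambda<Q$ and to use the upper bound on $\mu$.

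For part (ii), the embedding is a continuous inclusion. Any constant $c$ is in $L^p$ because $X$ is bounded and every ball has finite measure. A constant has zero Campanato seminorm, so $\|1\|_{L^{p,\lambda}_C}=\mu(X)^{1/p}$. The embedding gives $\|1\|_{L^{p,\lambda}_M}\le K\mu(X)^{1/p}$, and therefore
\[
r^{-\lambda}\mu(B(z,r))\le K^p\mu(X) \quad\text{for all } z\in X,\ r>0.
\]
This is exactly upper $\lambda$-regularity with $C_\lambda=K^p\mu(X)$. If only a set inclusion is assumed, the closed graph theorem yields continuity, since both spaces are Banach for bounded $X$ and convergence in either norm implies $L^p$ convergence. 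The only subtle point in part (ii) is this justification of boundedness of the embedding.
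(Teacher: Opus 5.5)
Your proposal is correct and follows the paper's proof. In (i) it uses the same large/small-ball split, the dyadic chain of averages via Lemma \ref{lem:averages_k}, upper regularity to control $\mu(B(x,r))$, and $r^Q\le(\diam X)^{Q-\lambda}r^\lambda$; in (ii) it tests the embedding on $f\equiv 1$. The only cosmetic differences are these: you stop the chain at $R=2^k r\in[\diam X/2,\diam X)$ and bound $|f_{B(x,R)}|$ by lower regularity, whereas the paper runs the chain from $X=B(x,2\diam X)$ down to $B(x,\diam X/2^k)$, closes the gap to $r$ with Lemma \ref{lem:average_rs}, and bounds $|f_X|$ through $\mu(X)$. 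Also, at $r=\diam X$ you only need $B\subseteq X$, not $\mu(B)=\mu(X)$, which can fail.
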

\begin{remark}
Let us stress that $(1)$ in Theorem \ref{thm:embCampMorr} does not hold for $\lambda =Q$. Indeed, if we take $((-1,1), |.|, \lambda)$, then the function $x \mapsto \log|x|$ belongs to $L^{1,1}_C\left((-1,1), |.|, \lambda \right)$, but in light of Theorem \ref{de} we have $L^{\infty}\left((-1,1), |.|, \lambda \right)= L_M^{1,1}\left((-1,1), |.|, \lambda\right)$.
\end{remark}
\begin{q}
Can we replace 'upper' by 'lower' in Theorem \ref{thm:embCampMorr} (2)?
\end{q}
\begin{q}
Let us suppose that $L^{p,\lambda}_C\left(X,\mu\right)\xhookrightarrow{}L^{p,\lambda}_M\left(X,\mu\right)$ for some $p \in [1,\infty)$ and every $\lambda \in [0,Q)$. Is it true that $\mu$ is upper $Q$-regular?
\end{q}
\begin{proof}
    \eqref{it:embCampMorr(i)} Let \(f\in L^{p,\lambda}_C\left(X,\mu\right)\),  \(x\in X, r>0\), and \(\lambda\in\left(0,Q\right)\). Let \(D:=\diam X\), we shall estimate the quantity 
\[
\left(\frac{1}{r^{\lambda}}\int_{B(x,r)}\left|f\right|^p\dd\mu \right)^{\frac{1}{p}}.
\]
We shall consider two cases.

If \(r\geq D\), then we have the following inequality
    \begin{align*} 
	\left(\frac{1}{r^{\lambda}}\int_{B(x,r)}\left|f\right|^p\dd\mu \right)^{\frac{1}{p}}\leq \frac{1}{r^{\lambda/p}}\left\|f\right\|_{L^p\left(X,\mu\right)} \leq D^{-\lambda/p}\left\|f\right\|_{L^p\left(X,\mu\right)}.
\end{align*}

If \(r<D\), then we fix \(k\in\mathbb{N}\) such that \(\frac{D}{2^{k+1}}\leq r<\frac{D}{2^k}\). We have 
 \begin{align}\label{ino}
        \int_{B(x,r)}\left|f\right|^p\dd\mu &\leq 2^{p-1}\left(\int_{B(x,r)}\left|f-f_{B(x,r)}\right|^p\dd\mu+\int_{B(x,r)}\left|f_{B(x,r)}\right|^p\dd\mu\right) \nonumber\\
        &\leq2^{p-1}\left(r^{\lambda}\left[f\right]_{L^{p,\lambda}_C\left(X,\mu\right)}^p + \int_{B(x,r)}\left|f_{B(x,r)}\right|^p\dd\mu\right). 
    \end{align}
On the other hand        
\begin{align}\label{ineq:proofCampMorey}
        \left|f_{B(x,r)}\right|\leq\left|f_X\right|+\left|f_X-f_{B\left(x,D/2^k\right)}\right|+\left|f_{B\left(x,D/2^k\right)}-f_{B(x,r)}\right|.
    \end{align}
Moreover, from Lemma \ref{lem:averages_k} we have
    \begin{align} \label{pierwsza}
        \left|f_X-f_{B\left(x,D/2^k\right)}\right|&=\left|f_{B(x,2D)}-f_{B\left(x,D/2^k\right)}\right|\leq C\left(2D\right)^{\frac{\lambda-Q}{p}}\left(2^{\frac{\left(Q-\lambda\right)\left(k+1\right)}{p}}-1\right)\left[f\right]_{L^{p,\lambda}_C\left(X,\mu\right)}
    \end{align}
    and Lemma \ref{lem:average_rs} yields 
    \begin{align}\label{druga}
        \left|f_{B\left(x,D/2^k\right)}-f_{B(x,r)}\right|\leq2^{1-\frac{1}{p}}\frac{\left(r^{\lambda}+\left(D2^{-k}\right)^{\lambda}\right)^{\frac{1}{p}}}{\left(\mu\left(B(x,r)\right)\right)^{\frac{1}{p}}}\left[f\right]_{L^{p,\lambda}_C\left(X,\mu\right)}\leq2^{1-\frac{1}{p}}\left(\frac{r^{\lambda}+\left(2r\right)^{\lambda}}{\mu\left(B(x,r)\right)}\right)^{\frac{1}{p}}\left[f\right]_{L^{p,\lambda}_C\left(X,\mu\right)}.
    \end{align}
   Therefore, integrating inequality (\ref{ineq:proofCampMorey}) over $B(x,r)$, gathering (\ref{pierwsza}) with (\ref{druga}) and using regularity of the measure, we get
    \begin{align*}
     &   \int_{B(x,r)}\left|f_{B(x,r)}\right|^p\dd\mu \leq3^{p-1}\left[\int_{B(x,r)}\left|f_X\right|^p+\left|f_X-f_{B\left(x,D/2^k\right)}\right|^p+\left|f_{B\left(x,D/2^k\right)}-f_{B(x,r)}\right|^p\dd\mu\right]\\
              &\leq 3^{p-1}\mu\left(B\left(x,r\right)\right)\Bigg[\fint_X \left|f\right|^p\dd \mu+\Bigg(C^p\left(2D\right)^{\lambda-Q}\left(2^{\frac{\left(Q-\lambda\right)\left(k+1\right)}{p}}-1\right)^p
        \left.\left.+2^{p-1}\frac{r^{\lambda}+\left(2r\right)^{\lambda}}{\mu\left(B(x,r)\right)}\right)\left[f\right]_{L^{p,\lambda}_C\left(X,\mu\right)}^p\right]\\
                &\leq 3^{p-1}\left[\frac{C_Qr^Q}{\mu\left(X\right)}\left\|f\right\|_{L^p\left(X,\mu\right)}^p+\left(C^p C_Qr^{\lambda}\left(1-2^{\frac{\left(\lambda-Q\right)\left(k+1\right)}{p}}\right)^p\right.\right.
        +2^{p-1}\left(r^{\lambda}+\left(2r\right)^{\lambda}\right)\bigg)\left[f\right]_{L^{p,\lambda}_C\left(X,\mu\right)}^p\bigg]\\
         &\leq 3^{p-1}\left[\frac{C_Qr^Q}{\mu\left(X\right)}\left\|f\right\|_{L^p\left(X,\mu\right)}^p+\left(C^p C_Qr^{\lambda}\left(1-\left(2D\right)^{\frac{\lambda-Q}{p}}r^{\frac{Q-\lambda}{p}}\right)^p\right.\right.
        +2^{p-1}\left(r^{\lambda}+\left(2r\right)^{\lambda}\right)\Bigg)\left[f\right]_{L^{p,\lambda}_C\left(X,\mu\right)}^p\Bigg].
    \end{align*}
Hence, gathering (\ref{ino}) with the above inequality, for $r<D$ we have
   \begin{align*}
        &\left(\frac{1}{r^{\lambda}}\int_{B(x,r)}\left|f\right|^p\dd\mu\right)^{\frac{1}{p}}\\
        \leq& \begin{multlined}[t]
            2^{1-\frac{1}{p}}\bigg(\left[f\right]_{L^{p,\lambda}_C\left(X,\mu\right)}^p\\+3^{p-1}\left[\frac{C_Qr^{Q-\lambda}}{\mu\left(X\right)}\left\|f\right\|_{L^p\left(X,\mu\right)}^p+\left(C^p C_Q\left(1-\left(2D\right)^{\frac{\lambda-Q}{p}}r^{\frac{Q-\lambda}{p}}\right)^p\right.+2^{p-1}\left(1+2^{\lambda}\right)\bigg)\left[f\right]_{L^{p,\lambda}_C\left(X,\mu\right)}^p\right]\bigg)^{\frac{1}{p}}
        \end{multlined}\\
        \leq&2^{1-\frac{1}{p}}\left(\left[f\right]_{L^{p,\lambda}_C\left(X,\mu\right)}^p+3^{p-1}\left[\frac{C_Q D^{Q-\lambda}}{\mu\left(X\right)}\left\|f\right\|_{L^p\left(X,\mu\right)}^p+\left(C^p C_Q+2^{p-1}\left(1+2^{\lambda}\right)\right)\left[f\right]_{L^{p,\lambda}_C\left(X,\mu\right)}^p\right]\right)^{\frac{1}{p}}\\
        \leq&2^{1-\frac{1}{p}}\left(3^{1-\frac{1}{p}}\left(\frac{C_Q D^{Q-\lambda}}{\mu\left(X\right)}\right)^{\frac{1}{p}}\left\|f\right\|_{L^p\left(X,\mu\right)}+\left(3^{p-1}\left(C^p C_Q+2^{p-1}\left(1+2^{\lambda}\right)\right)+1\right)^{\frac{1}{p}}\left[f\right]_{L^{p,\lambda}_C\left(X,\mu\right)}\right)
        \end{align*}
Finally, having in mind both cases we obtain
     \begin{align*}
        \left\|f\right\|_{L^{p,\lambda}_M\left(X,\mu\right)}=& \sup_{x\in X,\, r>0}\left(\frac{1}{r^{\lambda}}\int_{B(x,r)}\left|f\right|^p\dd\mu\right)^{\frac{1}{p}}\\
=&\max\left\{\sup_{x\in X,\, 0<r<D}\left(\frac{1}{r^{\lambda}}\int_{B(x,r)}\left|f\right|^p\dd\mu\right)^{\frac{1}{p}},\right.
        \left.\sup_{x\in X,\, r\geq D}\left(\frac{1}{r^{\lambda}}\int_{B(x,r)}\left|f\right|^p\dd\mu\right)^{\frac{1}{p}}\right\}\\
        \leq &\hat{C}\left\|f\right\|_{L^{p,\lambda}_C\left(X,\mu\right)},
    \end{align*}
    where we denoted
    \begin{align*}
        \hat{C}:= \max\left\{2^{1-\frac{1}{p}}3^{1-\frac{1}{p}}\left(\frac{C_Q D^{Q-\lambda}}{\mu\left(X\right)}\right)^{\frac{1}{p}},2^{1-\frac{1}{p}}\left(3^{p-1}\left(C^p C_Q+2^{p-1}\left(1+2^{\lambda}\right)\right)+1\right)^{\frac{1}{p}},D^{-\frac{\lambda}{p}}\right\}.
    \end{align*}
    \eqref{it:embCampMorr(ii)} Let \(C\) denote the norm of the embedding operator $L^{p,\lambda}_C\left(X,\mu\right)\xhookrightarrow{}L^{p,\lambda}_M\left(X,\mu\right)$. Then for \(x\in X, r>0\), we get
    \begin{align*}
        \frac{\mu\left(B\left(x,r\right)\right)}{r^{\lambda}}\leq\left\|1\right\|_{L^{p,\lambda}_M\left(X,\mu\right)}^p\leq C^p\left\|1\right\|_{L^{p,\lambda}_C\left(X,\mu\right)}^p=C^p\left(\left\|1\right\|_{L^p\left(X,\mu\right)}+\leftindex^C{\left[1\right]}_{p,\lambda}\right)^p=C^p\left\|1\right\|_{L^p\left(X,\mu\right)}^p=C^p\mu\left(X\right).
    \end{align*}
\end{proof}

\begin{tw} \label{de}
    Let \(\left(X,d,\mu\right)\) be a bounded metric-measure space and let \(p\in\left[1,\infty\right), Q\in\left(0,\infty\right)\). 
    \begin{enumerate}
        \item\label{it:embedLinfMorr} The measure \(\mu\) is upper \(Q\)-regular if and only if the following embedding holds
        \[L^{\infty}\left(X,\mu\right)\hookrightarrow L^{p,Q}_M\left(X,\mu\right).\]
        \item\label{it:equalLinfMorr} If the measure \(\mu\) is Ahlfors \(Q\)-regular and Borel regular, then
        \[L^{\infty}\left(X,\mu\right)= L_M^{p,Q}\left(X,\mu\right)\]
        and the norms are equivalent.
    \end{enumerate}
\end{tw}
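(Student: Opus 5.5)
For part \eqref{it:embedLinfMorr}, I would prove the two implications separately. Assume first that $\mu$ is upper $Q$-regular, and let $f\in L^\infty(X,\mu)$. Since $X$ is bounded, $\mu(X)=\mu(B(x_0,2\diam X))<\infty$, so $f\in L^p$. Put $D=\diam X$ and fix $x\in X$, $r>0$.

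If $r<3D$, then upper regularity gives
\[
\frac{1}{r^Q}\int_{B(x,r)}|f|^p\dd\mu\leq \|f\|_\infty^p\,\frac{\mu(B(x,r))}{r^Q}\leq C_Q\|f\|_\infty^p .
\]
If $r\geq 3D$, then $B(x,r)=X=B(x,2D)$. In this case
\[
r^{-Q}\mu(X)\leq (2D)^{-Q}\mu(B(x,2D))\leq C_Q\|f\|_\infty^p\text{-type bound, namely } r^{-Q}\mu(X)\leq C_Q .
\]
Taking $p$-th roots in both cases gives $\|f\|_{L^{p,Q}_M}\leq C_Q^{1/p}\|f\|_\infty$.

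For the converse I would test the embedding on $f\equiv 1$, exactly as in the proof of Theorem~\ref{thm:embCampMorr}(2). This gives
\[
\frac{\mu(B(x,r))}{r^Q}\leq \|1\|^p_{L^{p,Q}_M}\leq C^p\|1\|_\infty^p=C^p
\]
for all $x$ and $r$, which is upper $Q$-regularity.

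For part \eqref{it:equalLinfMorr}, the inclusion $L^\infty\hookrightarrow L^{p,Q}_M$ with its norm bound follows from part \eqref{it:embedLinfMorr}. The real content is the reverse bound: if $f\in L^{p,Q}_M$, then $\|f\|_\infty\leq C\|f\|_{L^{p,Q}_M}$. Lower regularity gives, for $r<3D$,
\[
\fint_{B(x,r)}|f|^p\dd\mu\leq \frac{r^Q}{\mu(B(x,r))}\,\|f\|^p_{L^{p,Q}_M}\leq C_Q\|f\|^p_{L^{p,Q}_M}.
\]
So all ball averages of $|f|^p$ are uniformly bounded. I would then pass from averages to pointwise values through a Lebesgue differentiation theorem. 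Since $\mu$ is doubling (Ahlfors regularity gives $C_\mu=C_Q^22^Q$), the Lebesgue differentiation theorem holds for doubling metric measure spaces. It yields $|f(x)|^p=\lim_{r\to0}\fint_{B(x,r)}|f|^p\dd\mu\leq C_Q\|f\|^p_{L^{p,Q}_M}$ for $\mu$-a.e.\ $x$, and hence $\|f\|_\infty\leq C_Q^{1/p}\|f\|_{L^{p,Q}_M}$.

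I expect the main obstacle to be justifying differentiation in this setting, and this is where Borel regularity enters: Example~\ref{Paul} shows that Ahlfors regularity alone does not guarantee it. The standard proof goes through a Vitali-type covering argument for doubling measures, applied to the set $E=\{|f|^p>C_Q\|f\|^p_{L^{p,Q}_M}+\varepsilon\}$. That argument needs $E$ to be approximable from outside by Borel (open) sets of comparable measure, and Borel regularity provides the Borel hull. Alternatively, I could first reduce to the case of continuous functions being dense in $L^p$, which requires regularity of $\mu$, and then run the maximal-function estimate. Once differentiation is available, combining the two inequalities gives equality of the spaces with equivalent norms.
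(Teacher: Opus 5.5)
Your proposal is correct and follows the paper's proof. The converse in (1) is obtained by testing with $f\equiv 1$, and (2) combines lower $Q$-regularity with the Lebesgue differentiation theorem for Borel regular doubling measures; the differences are minor improvements: you treat radii $r\geq 3\diam X$ explicitly in (1), and in (2) you read off $\|f\|_{L^{\infty}(X,\mu)}\leq C_Q^{1/p}\|f\|_{L^{p,Q}_M(X,\mu)}$ directly instead of arguing by contradiction and invoking the Banach isomorphism theorem, which also spares you any appeal to the completeness of $L^{p,Q}_M(X,\mu)$.
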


\begin{proof}
    Let us begin with the 'if' part of the implication in \eqref{it:embedLinfMorr}. Let \(f\in L^{\infty}\left(X,\mu\right)\). From the boundness of the space \(X\) we get that \(f\in L^p\left(X,\mu\right)\). Moreover, since $\mu$ is upper $Q$-regular we have
    \begin{align*}
        \left\|f\right\|_{L^{p,Q}_M\left(X,\mu\right)}=\sup_{x\in X, r>0}\left(\frac{1}{r^{Q}}\int_{B\left(x,r\right)}\left|f\right|^p\dd\mu\right)^{\frac{1}{p}}\leq \sup_{x\in X, r>0}\left(\left\|f\right\|^p_{\infty}\frac{\mu\left(B\left(x,r\right)\right)}{r^Q}\right)^{\frac{1}{p}}\leq C_Q^{\frac{1}{p}}\left\|f\right\|_{L^{\infty}\left(X,\mu\right)}.
    \end{align*}
    For the proof of the 'only if' part, let us denote by $C$ the constant of the embedding $L^{\infty}\left(X,\mu\right)\hookrightarrow L^{p,Q}_M\left(X,\mu\right)$. Fix \(x\in X, r>0\). Taking \(f\equiv 1\), we get 
    \begin{align*}
        \frac{\mu\left(B\left(x,r\right)\right)}{r^Q}=\frac{1}{r^Q}\int_{B\left(x,r\right)}\left|f\right|^p\dd\mu\leq\left\|f\right\|_{L^{p,Q}_M\left(X,\mu\right)}^p\leq C^p\left\|f\right\|_{L^{\infty}\left(X,\mu\right)}=C^p.
    \end{align*}
    Moving to the proof of \eqref{it:equalLinfMorr}, let us observe that from \eqref{it:embedLinfMorr} we have that the embedding operator
    \[\mathcal{I}:L^{\infty}\left(X,\mu\right)\to L^{p,Q}_M\left(X,\mu\right)\]
    is bounded.  We will show that it is also surjective. By the way of contradiction, let us suppose that there exists a function \(f\in L^{p,Q}_M\left(X,\mu\right)\setminus L^{\infty}\left(X,\mu\right)\). For \(n\in\mathbb{N}\) let us denote \(A_n=\left\{x\in X: \left|f\left(x\right) \right|>n\right\}\). Since $f \notin L^{\infty}\left(X,\mu\right)$,  we have \(\mu\left(A_n\right)>0\) for \(n\in\mathbb{N}\). Having in mind that the measure is Borel regular and doubling, from the Lebesgue differentiation theorem \cite{H} we get that there exists a set of full measure \(F\subset X\) such that 
    \[ 
\lim_{r\to0} \fint_{B\left(x, r\right)} \left|f\right|^p \dd \mu=\left|f\left(x\right)\right|^p \,\,\, \text{for} \,\,\, x \in F.
 \]
    It is clear that for any \(n\in\mathbb{N}\) we have \(\mu\left(A_n\cap F\right)>0\). In particular $A_n\cap F \neq \emptyset$. Let us fix \(n\in \mathbb{N}\) and \(x_n\in A_n\cap F\), then we get
    \begin{align*}
        \left\|f\right\|_{L^{p,Q}_M\left(X,\mu\right)}&=\sup_{x\in X, r>0}\left(\frac{1}{r^Q} \int_{B\left(x,r\right)}\left|f\right|^p\dd\mu\right)^{\frac{1}{p}}\geq \limsup_{r\to 0}\left(\frac{1} {r^Q}\int_{B\left(x_n,r\right)}\left|f\right|^p\dd\mu\right)^{\frac{1}{p}}\\
        &\geq \limsup_{r\to 0}\left(\frac{C_Q^{-1}} {\mu\left(B\left(x_n,r\right)\right)}\int_{B\left(x_n,r\right)}\left|f\right|^p\dd\mu\right)^{\frac{1}{p}} \\
        & = \left(C_Q ^{-1}\left|f\left(x_n \right)\right|^p\right) ^{\frac{1}{p}} \geq C_Q^{-\frac{1}{p}}n.
    \end{align*}
    So we get that \( \left\|f\right\|_{L^{p,Q}_M\left(X,\mu\right)}=\infty\) and thus \(f\notin L^{p, Q}_M\left(X,\mu\right)\) is a contradiction. The Banach isomorphism theorem concludes the proof.
    \end{proof}
\begin{remark}
Let us observe that Theorem \ref{de} (2) is not optimal. If we assume that \(\left(X,d,\mu\right)\) is a bounded metric-measure space with Borel regular measure \(\mu\) which is upper \(Q\)-regular and almost everywhere lower $Q$-regular\footnote{The measure $\mu$ is almost everywhere lower $Q$-regular if there exists \(C>0\) such that for \(\mu\)-almost every \(x\in X\) there exists $\delta>0$ such that $Cr^Q\leq \mu\left(B\left(x,r\right)\right)$ for $r\in(0,\delta)$. }, then $L^{\infty}\left(X,\mu\right)= L_M^{p,Q}\left(X,\mu\right)$. Indeed, this observation follows from the inspection of the proof of Theorem \ref{de} and our assumptions on the measure allow us to conclude that  \(\left(X,d,\mu\right)\) is a Vitali metric-measure space, and in particular, the Lebesgue differentiation theorem holds on \(\left(X,d,\mu\right)\) (see \cite{HKST}).
\end{remark}

    \begin{tw} \label{camp}
        Let \(\left(X,d,\mu\right)\) be a metric-measure space, $Q \in (0,\infty)$, $\lambda \in (Q, \infty)$, $p \in [1, \infty)$, $a \in [0, \infty)$, $x_0 \in X$ and $\alpha=(\lambda-Q)/p$.
\begin{enumerate}
 \item[i)]  If $(X, d)$ is bounded and the measure \(\mu\) is upper \(Q\)-regular, then 
        \[C^{0,\alpha}\left(X\right)\xhookrightarrow{}L^{p,\lambda}_C\left(X,\mu+a\delta_{x_0} \right).\]
\item[ii)]  If $\mu$ is Borel regular, doubling, and lower \(Q\)-regular measure, then\footnote{This embedding means that there exists $C$ such that every $f \in L^{p,\lambda}_C\left(X,\mu\right)$ has a continuous representative $\tilde{f}$ and the inequality $\left\|\tilde{f}\right\|_{C^{0,\alpha}\left(X\right)} \leq C \left\|f\right\|_{L^{p,\lambda}_C\left(X,\mu\right)}$ holds.}
        \[L^{p,\lambda}_C\left(X,\mu + a \delta_{x_0}\right)\xhookrightarrow{}C^{0,\alpha}\left(X\right).\]
\end{enumerate}
            \end{tw}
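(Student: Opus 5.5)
For i), write $\nu=\mu+a\delta_{x_0}$ and take $f\in C^{0,\alpha}(X)$. Since $X$ is bounded and $\nu(X)<\infty$, we get $f\in L^p(X,\nu)$ with $\|f\|_{L^p(X,\nu)}\le \nu(X)^{1/p}\sup|f|$. For the seminorm we use Lemma~\ref{lem:equivOfNorms} and take $c=f(x)$ on the ball $B(x,r)$, where $r<3\diam X$. Every $y\in B(x,r)$ satisfies $|f(y)-f(x)|\le [f]_{C^{0,\alpha}} r^\alpha$. This gives
\[
\int_{B(x,r)}|f-f(x)|^p\,\dd\nu\le [f]^p_{C^{0,\alpha}} r^{\alpha p}\bigl(\mu(B(x,r))+a\,\delta_{x_0}(B(x,r))\bigr).
\]
The $\mu$-part is bounded by $C_Q r^{\alpha p+Q}=C_Q r^\lambda$, as required. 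The Dirac part deserves care. If $x_0\in B(x,r)$, the integrand $|f-f(x)|^p$ at $x_0$ equals $|f(x_0)-f(x)|^p\le [f]^p d(x,x_0)^{\alpha p}\le [f]^p r^{\alpha p}$. This alone only gives $a r^{\alpha p}$, which is not bounded by $r^\lambda$ for small $r$. So the right constant is $c=f(x_0)$ whenever $x_0\in B(x,r)$. Then the Dirac term vanishes, and the $\mu$-integral is bounded using $|f(y)-f(x_0)|\le [f](2r)^\alpha$. For large radii $r\ge 3\diam X$ the ball is all of $X$, and we bound the integral by $2^p\sup|f|^p\,\nu(X)\le C r^\lambda$ with $C$ depending on $\diam X$. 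Hence $[f]_{L^{p,\lambda}_C(X,\nu)}\lesssim \|f\|_{C^{0,\alpha}}$.

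For ii), take $f\in L^{p,\lambda}_C(X,\nu)$. The first step is to compare the balls $B\ni x_0$ for the two measures. Since $\nu\ge\mu$, Jensen-type comparison gives $\inf_c\int_B|f-c|^p\,\dd\mu\le\inf_c\int_B|f-c|^p\,\dd\nu$. So by Lemma~\ref{lem:equivOfNorms}, $f\in L^{p,\lambda}_C(X,\mu)$ with $[f]_{\mu}\le 2[f]_\nu$, and also $\|f\|_{L^p(\mu)}\le\|f\|_{L^p(\nu)}$. It therefore suffices to treat $a=0$; note that the singleton $\{x_0\}$ is $\mu$-null by lower regularity and doubling, so the representative is unambiguous. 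With $a=0$, Lemma~\ref{lem:averages_k} (for $\lambda>Q$) gives
\[
|f_{B(x,r)}-f_{B(x,r2^{-k})}|\le C r^{\alpha}[f],
\]
uniformly in $k$. A standard Cauchy argument then shows that $f_{B(x,r2^{-k})}$ converges for every $x$, and for intermediate radii one uses Lemma~\ref{lem:average_rs}. Define $\tilde f(x)=\lim_{s\to0}f_{B(x,s)}$. By the Lebesgue differentiation theorem, valid since $\mu$ is Borel regular and doubling, $\tilde f=f$ $\mu$-a.e. Moreover $|\tilde f(x)-f_{B(x,r)}|\le Cr^\alpha[f]$.

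For the Hölder estimate, take $x\ne y$ with $r=d(x,y)$. Then $B(x,r)\subset B(y,2r)$, and
\[
|f_{B(x,r)}-f_{B(y,2r)}|\le \mu(B(x,r))^{-1/p}\Bigl(\int_{B(y,2r)}|f-f_{B(y,2r)}|^p\,\dd\mu\Bigr)^{1/p}\le C_Q^{1/p} r^{-Q/p}(2r)^{\lambda/p}[f].
\]
This gives $|\tilde f(x)-\tilde f(y)|\lesssim r^\alpha[f]$, provided $r<3\diam X$ so that lower regularity applies; this always holds. For boundedness, lower regularity with $\lambda>Q$ forces $X$ to be bounded, since otherwise balls of radius $r\to\infty$ contradict finiteness of $\mu(X)\le\|1\|$... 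Rather than rely on that, we argue directly. Fix $r_0=1$ (or $\diam X$ if smaller). Then
\[
|\tilde f(x)|\le |f_{B(x,r_0)}|+Cr_0^\alpha[f]\le \mu(B(x,r_0))^{-1/p}\|f\|_{L^p}+C[f]\le C_Q^{1/p}r_0^{-Q/p}\|f\|_{L^p}+C[f].
\]
The main obstacle I expect is the Dirac mass in part i): making sure the choice of constant $c$ kills the atom uniformly in $r$, which is handled by the switch to $c=f(x_0)$.
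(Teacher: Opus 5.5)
Your argument is correct in substance and follows the paper's overall plan, with two local variations worth noting. In i) the paper bounds $\int_{B}|f-f_{B}|^p\,\dd\nu$ by Jensen's double integral $\int_B\fint_B|f(y)-f(z)|^p\,\dd\nu(z)\,\dd\nu(y)$. There the atom--atom term vanishes, and each remaining term is controlled by $\mu(B)\,(2r)^{\alpha p}\|f\|^p_{C^{0,\alpha}(X)}$. You instead pass to $\vertiii{\cdot}_{p,\lambda}$ and choose $c=f(x_0)$ whenever $x_0\in B(x,r)$. This removes the atom outright and is shorter, at the cost of the factor $2$ from Lemma~\ref{lem:equivOfNorms}. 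In ii) the paper compares $f_{B(x,r)}$ with $f_{B(y,r)}$, $r=2d(x,y)$, through $B(x,r/2)\subset B(x,r)\cap B(y,r)$, and needs doubling at that step. Your inclusion $B(x,r)\subset B(y,2r)$ uses only lower regularity, so doubling enters solely through the Lebesgue differentiation theorem. You also justify the reduction $L^{p,\lambda}_C(X,\nu)\hookrightarrow L^{p,\lambda}_C(X,\mu)$, which the paper merely asserts.

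Two side remarks are wrong and should be removed. First, $\{x_0\}$ need not be $\mu$-null. On $X=\{0\}\cup[1,2]\subset\mathbb{R}$ with $\mu$ equal to $\delta_0$ plus Lebesgue measure on $[1,2]$, the measure is Borel regular, doubling and lower $1$-regular, yet $\mu(\{0\})=1$. The remark also aims at the wrong target. For $a>0$ the function $\tilde f$, built from $\mu$-averages, must agree with $f$ $\nu$-almost everywhere, i.e.\ $\tilde f(x_0)=f(x_0)$, and this is in doubt precisely when $\mu(\{x_0\})=0$. It does hold. Write $B_r=B(x_0,r)$, $m_r$ for the $\nu$-average of $f$ over $B_r$, and $[f]_\nu$ for the Campanato seminorm with respect to $\nu$. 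Then $a|f(x_0)-m_r|^p\le[f]_\nu^p r^\lambda$ and $|f_{B_r}-m_r|\le\mu(B_r)^{-1/p}[f]_\nu r^{\lambda/p}\le C_Q^{1/p}[f]_\nu r^{\alpha}$, hence $\tilde f(x_0)=\lim_{r\to0}f_{B_r}=f(x_0)$. The paper passes over this point as well. Second, lower regularity does not force $X$ to be bounded: $\mathbb{R}^n$ with Lebesgue measure is lower $n$-regular. You do not use this claim, so simply delete it.
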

\begin{q}
	 Let \(\left(X,d,\mu\right)\) be a metric-measure space such that the measure of every singleton is zero and let $Q \in (0,\infty)$, $\lambda \in (Q, \infty)$, $p \in [1, \infty)$.
	Suppose that $C^{0,(\lambda-Q)/p}\left(X\right)\xhookrightarrow{}L^{p,\lambda}_C\left(X,\mu \right)$. Is it true that $\mu$ is upper $Q-$regular?
\end{q}
    \begin{proof}
     $i)$   Let $\nu:=\mu+a\delta_{x_0}$, \(f\in C^{0,\alpha}\left(X\right)\) and let us fix \(x\in X, r>0\). We shall consider two cases. 

     Case I: \(x_0\notin B\left(x,r\right)\). Then, by the Jensen inequality, we have
         \begin{align*}
        \frac{1}{r^{\lambda}}\int_{B\left(x,r\right)}\left|f\left(y\right)-f_{B\left(x,r\right)}\right|^p\dd\nu\left(y\right) &=\frac{1}{r^{\lambda}}\int_{B\left(x,r\right)}\left|\fint_{B\left(x,r\right)}f\left(y\right)-f\left(z\right)\dd\nu\left(z\right)\right|^p\dd\nu\left(y\right)\\
&=\frac{1}{r^{\lambda}}\int_{B\left(x,r\right)}\left|\fint_{B\left(x,r\right)}f\left(y\right)-f\left(z\right)\dd\mu\left(z\right)\right|^p\dd\mu\left(y\right)\\
        &\leq\frac{1}{r^{\lambda}}\int_{B\left(x,r\right)}\fint_{B\left(x,r\right)}\left|f\left(y\right)-f\left(z\right)\right|^p\dd\mu\left(z\right)\dd\mu\left(y\right)\\
        &=\frac{1}{r^{\lambda}}\int_{B\left(x,r\right)}\fint_{B\left(x,r\right)}\left(\frac{\left|f\left(y\right)-f\left(z\right)\right|}{\left(d\left(y,z\right)\right)^{\alpha}}\right)^p\left(d\left(y,z\right)\right)^{\alpha p}\dd\mu\left(z\right)\dd\mu\left(y\right)\\
        &\leq\frac{1}{r^{\lambda}}\int_{B\left(x,r\right)}\fint_{B\left(x,r\right)}\left\|f\right\|_{C^{0,\alpha}\left(X\right)}^p \left(d\left(y,x\right)+d\left(x,z\right)\right)^{\alpha p}\dd\mu\left(z\right)\dd\mu\left(y\right)\\
        &\leq\frac{1}{r^{\lambda}}\int_{B\left(x,r\right)}\fint_{B\left(x,r\right)}\left\|f\right\|_{C^{0,\alpha}\left(X\right)}^p\left(2r\right)^{\alpha p}\dd\mu\left(z\right)\dd\mu\left(y\right)\\
        &=r^{-\lambda}\mu\left(B\left(x,r\right)\right)\left\|f\right\|_{C^{0,\alpha}\left(X\right)}^p\left(2r\right)^{\alpha p}\leq C_Q 2^{\alpha p}\left\|f\right\|_{C^{0,\alpha}\left(X\right)}^p r^{\alpha p+Q-\lambda}\\
        &=C_Q 2^{\alpha p}\left\|f\right\|_{C^{0,\alpha}\left(X\right)}^p.
    \end{align*}
   
        Case II: \(x_0\in B\left(x,r\right)\). Then we obtain
        \begin{align*}
          \frac{1}{r^{\lambda}}\int_{B\left(x,r\right)}\left|f-f_{B\left(x,r\right)}\right|^p\dd\nu
                    \leq& \frac{1}{r^{\lambda}}\int_{B\left(x,r\right)}\fint_{B\left(x,r\right)}\left|f\left(y\right)-f\left(z\right)\right|^p\dd\nu\left(z\right)\dd\nu\left(y\right)\\
          =&\frac{1}{r^{\lambda}}\fint_{B\left(x,r\right)}\left(\int_{B\left(x,r\right)}\left|f\left(y\right)-f\left(z\right)\right|^p\dd\mu\left(z\right)+a\left|f\left(y\right)-f\left(x_0\right)\right|^p\right)\dd\nu\left(y\right)\\
                    \leq& \frac{1}{r^{\lambda}}\fint_{B\left(x,r\right)}\left(\mu\left(B\left(x,r\right)\right)\left\|f\right\|_{C^{0,\alpha}\left(X\right)}^p\left(2r\right)^{\alpha p}+a\left|f\left(y\right)-f\left(x_0\right)\right|^p\right)\dd\nu\left(y\right)\\
          =& \frac{1}{r^{\lambda}}\left(\mu\left(B\left(x,r\right)\right)\left\|f\right\|_{C^{0,\alpha}\left(X\right)}^p\left(2r\right)^{\alpha p}+\fint_{B\left(x,r\right)}a\left|f\left(y\right)-f\left(x_0\right)\right|^p\dd\nu\left(y\right)\right)\\
          \leq& \begin{multlined}[t]
              \frac{1}{r^{\lambda}}C_Qr^Q\left\|f\right\|_{C^{0,\alpha}\left(X\right)}^p\left(2r\right)^{\alpha p}\\
              +\frac{1}{r^{\lambda}\nu\left(B\left(x,r\right)\right)}\left(\int_{B\left(x,r\right)}a\left|f\left(y\right)-f\left(x_0\right)\right|^p\dd\mu\left(y\right)\right)
          \end{multlined}\\
          \leq& \begin{multlined}[t]
              C_Q\left\|f\right\|_{C^{0,\alpha}\left(X\right)}^p2^{\alpha p}\\
              +\frac{1}{r^{\lambda}\nu\left(B\left(x,r\right)\right)}\left(\int_{B\left(x,r\right)}a\left\|f\right\|_{C^{0,\alpha}\left(X\right)}^p\left(d\left(y,x_0\right)\right)^{\alpha p}\dd\mu\left(y\right)\right)
          \end{multlined}\\
          \leq& C_Q\left\|f\right\|_{C^{0,\alpha}\left(X\right)}^p2^{\alpha p}+\frac{1}{r^{\lambda}\nu\left(B\left(x,r\right)\right)}\mu\left(B\left(x,r\right)\right)a\left\|f\right\|_{C^{0,\alpha}\left(X\right)}^p\left(2r\right)^{\alpha p}\\
          \leq& C_Q\left\|f\right\|_{C^{0,\alpha}\left(X\right)}^p2^{\alpha p}+\frac{a}{r^{\lambda}\left(\mu\left(B\left(x,r\right)\right)+a\right)}C_Qr^Q\left\|f\right\|_{C^{0,\alpha}\left(X\right)}^pr^{\alpha p}2^{\alpha p}\\
          \leq& C_Q2^{\alpha p+1}\left\|f\right\|_{C^{0,\alpha}\left(X\right)}^p.
     \end{align*}

    Gathering both cases we obtain
    \[\left[f\right]_{L^{p,\lambda}_C\left(X,\nu\right)}=\sup_{x\in X, r>0}\left(\frac{1}{r^{\lambda}}\int_{B(x,r)}\left|f-f_{B(x,r)}\right|^p\dd\nu\right)^{\frac{1}{p}}\leq C_Q^{\frac{1}{p}} 2^{\alpha+\frac{1}{p}}\left\|f\right\|_{C^{0,\alpha}\left(X\right)}.\]
    Moreover, we have
        \[\left\|f\right\|_{L^p\left(X,\nu\right)}=\left(\int_X\left|f\right|^p\dd\nu\right)^{\frac{1}{p}}\leq\left(\nu\left(X\right)\right)^{\frac{1}{p}}\left\|f\right\|_{C^{0,\alpha}\left(X\right)},\]
and the proof of $i)$ follows.

        $ii)$ Since $L^{p,\lambda}_C\left(X,\mu + a \delta_{x_0}\right)\xhookrightarrow{} L^{p,\lambda}_C\left(X,\mu \right)$, it is enough to prove $L^{p,\lambda}_C\left(X,\mu \right)\xhookrightarrow{}C^{0,\alpha}\left(X\right)$. First of all, we shall prove the following lemma.
\begin{lem}\label{lem:uniformLeb}
         Let \(\left(X,d,\mu\right)\) be a metric-measure space with Borel regular, doubling and lower \(Q\)-regular measure \(\mu\). If \(p\in\left[1,\infty\right), \lambda\in\left(Q,\infty\right)\), and \(f\in L_C^{p,\lambda}\left(X,\mu\right)\), then there exists a Borel function \(\tilde{f}\) such that \(\tilde{f}=f\) \(\mu\)-almost everywhere, \(\tilde{f}\left(x\right)=\lim_{r\to0^+}f_{B\left(x,r\right)}\) uniformly on \(X\) and the following inequality holds
        \[
\,\left|f_{B\left(x,r\right)}-F\left(x\right)\right|\leq K\left[f\right]_{L^{p,\lambda}_C\left(X,\mu\right)}r^{\frac{\lambda-Q}{p}}\,\,\, \text{for}\,\,\, x\in X, r\in\left(0,3\diam X\right),
\]
        where \(K=2^{1+\frac{Q-1}{p}}\left(1+2^{-\lambda}\right)^{\frac{1}{p}}C_Q^{\frac{1}{p}}\left(1-2^{\frac{\left(Q-\lambda\right)}{p}}\right)^{-1}.\)
    \end{lem}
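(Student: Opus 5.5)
The plan is to show that the averages $f_{B(x,r)}$ are uniformly Cauchy as $r\to0^+$, with an explicit rate coming from Lemma \ref{lem:average_rs} and the lower $Q$-regularity. Then I define $\tilde f$ as their limit and use the Lebesgue differentiation theorem to identify $\tilde f$ with $f$ almost everywhere. (The function $F$ in the displayed inequality is understood to be $\tilde f$.) Throughout, $\alpha=(\lambda-Q)/p>0$.

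\emph{Step 1: dyadic estimate.} Fix $x\in X$ and $r\in(0,3\diam X)$. For $m\ge 0$, apply Lemma \ref{lem:average_rs} with radii $r/2^{m+1}<r/2^m$ and use $\mu(B(x,r/2^{m+1}))\ge C_Q^{-1}(r/2^{m+1})^Q$. This is exactly the computation inside the proof of Lemma \ref{lem:averages_k}, and it gives
\[
\left|f_{B(x,r/2^m)}-f_{B(x,r/2^{m+1})}\right|\le 2^{1+\frac{Q-1}{p}}\left(1+2^{-\lambda}\right)^{\frac1p}C_Q^{\frac1p}\, r^{\alpha}\,2^{-\alpha m}\,[f]_{L^{p,\lambda}_C(X,\mu)}.
\]
Since $\lambda>Q$, these terms form a convergent geometric series. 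Summing over $m\ge k$ shows that $(f_{B(x,r/2^m)})_m$ is Cauchy, and for all $k<l$,
\[
\left|f_{B(x,r/2^k)}-f_{B(x,r/2^l)}\right|\le K\,[f]_{L^{p,\lambda}_C(X,\mu)}\,(r/2^k)^{\alpha},
\]
with exactly the constant $K$ of the statement, because $\sum_{m\ge0}2^{-\alpha m}=(1-2^{(Q-\lambda)/p})^{-1}$. The bound does not depend on $x$, so this is a uniform Cauchy estimate.

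\emph{Step 2: the limit is independent of the dyadic sequence.} The main obstacle is to get convergence along all $r\to0^+$, not just along the sequence $r/2^m$, and to check that the limit does not depend on the starting radius. For $0<s<r$ choose $k$ with $r/2^{k+1}\le s<r/2^k$. Lemma \ref{lem:average_rs} together with lower regularity gives
\[
|f_{B(x,s)}-f_{B(x,r/2^k)}|\le C\, s^{\alpha}[f]_{L^{p,\lambda}_C(X,\mu)},
\]
since $\mu(B(x,s))\gtrsim s^Q$ and $(r/2^k)^\lambda\le 2^\lambda s^\lambda$. Combined with Step 1, this shows that $\lim_{r\to0^+}f_{B(x,r)}$ exists uniformly in $x$. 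Call the limit $\tilde f(x)$. Letting $l\to\infty$ in the Step 1 estimate with $k=0$ yields $|f_{B(x,r)}-\tilde f(x)|\le K[f]r^{\alpha}$ for every $r\in(0,3\diam X)$ with the sharp constant $K$; this step uses only the dyadic sequence starting at $r$ itself, together with the fact that the limit exists.

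\emph{Step 3: measurability and $\tilde f=f$ a.e.} Each map $x\mapsto f_{B(x,r)}$ is Borel, since it is lower semicontinuous in the appropriate sense. In any case $\tilde f$ is the pointwise limit of the functions $x\mapsto f_{B(x,1/n)}$, which are Borel because $x\mapsto \int_{B(x,r)}g\,d\mu$ is Borel for $g\ge 0$; hence $\tilde f$ is Borel. Because $\mu$ is Borel regular and doubling, the Lebesgue differentiation theorem \cite{H} applies. Note that $f\in L^p\subset L^1_{loc}$. It gives $f_{B(x,r)}\to f(x)$ for $\mu$-a.e. $x$, so $\tilde f=f$ $\mu$-almost everywhere, which completes the proof.
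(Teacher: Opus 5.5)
Your proposal is correct and follows essentially the same route as the paper: the dyadic Cauchy estimate from Lemma \ref{lem:averages_k} with the constant $K$, then Lemma \ref{lem:average_rs} together with lower $Q$-regularity to remove the dependence on the starting radius, then the limit $l\to\infty$ with $k=0$ for the bound $|f_{B(x,r)}-\tilde f(x)|\le K[f]r^{\alpha}$, and finally the Lebesgue differentiation theorem to get $\tilde f=f$ almost everywhere. The one difference is minor: the paper compares the dyadic averages $f_{B(x,r/2^k)}$ and $f_{B(x,s/2^k)}$ at the same index $k$, while you compare $f_{B(x,s)}$ with the nearest dyadic average $f_{B(x,r/2^k)}$. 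Your version gives convergence along all $s\to0^+$ directly, and your Fatou/lower-semicontinuity argument also supplies the Borel measurability of $x\mapsto f_{B(x,r)}$, which the paper only asserts.
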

    \begin{proof}
        Let \(f\in L^{p,\lambda}_C\left(X,\mu\right), x\in X, r\in\left(0, 3\diam X\right)\) and \(n, k\in\mathbb{N}\), then from Lemma \ref{lem:averages_k} we get 
        \begin{align} \label{K}
        \left|f_{B\left(x,r/2^n\right)}-f_{B\left(x,r/2^{n+k}\right)}\right|&\leq K\left(r/2^n\right)^{\frac{\lambda-Q}{p}}\left(1-2^{\frac{\left(Q-\lambda\right)k}{p}}\right)\left[f\right]_{L^{p,\lambda}_C\left(X,\mu\right)} \nonumber\\
        &\leq K\left(r/2^n\right)^{\frac{\lambda-Q}{p}}\left[f\right]_{L^{p,\lambda}_C\left(X,\mu\right)},
    \end{align}
    where \(K=2^{1+\frac{Q-1}{p}}\left(1+2^{-\lambda}\right)^{\frac{1}{p}}C_Q^{\frac{1}{p}}\left(1-2^{\frac{\left(Q-\lambda\right)}{p}}\right)^{-1}>0\). So \(\left(f_{B\left(x,r/2^n\right)}\right)_{n=0}^{\infty}\) is a Cauchy sequence. Let us denote \(\tilde{f}_r\left(x\right)=\lim_{n\to\infty}f_{B\left(x,r/2^n\right)}\). The function \(\tilde{f}_r\) is Borel as a limit of Borel maps. Fix \(0<r<s<3\diam X\) and \(k\in\mathbb{N}\), from Lemma \ref{lem:average_rs} we get
    \begin{align*}
      \left|f_{B\left(x,r/2^k\right)}-f_{B\left(x,s/2^k\right)}\right|&\leq 2^{1-\frac{1}{p}}\frac{\left(\left(r/2^k\right)^{\lambda}+\left(s/2^k\right)^{\lambda}\right)^{\frac{1}{p}}}{\left(\mu\left(B\left(x,r/2^k\right)\right)\right)^{\frac{1}{p}}}\left[f\right]_{L^{p,\lambda}_C\left(X,\mu\right)}\\
      &\leq 2^{1-\frac{1}{p}}C_Q^{\frac{1}{p}}\frac{\left(\left(r/2^k\right)^{\lambda}+\left(s/2^k\right)^{\lambda}\right)^{\frac{1}{p}}}{\left(r/2^k\right)^{\frac{Q}{p}}}\left[f\right]_{L^{p,\lambda}_C\left(X,\mu\right)}\\
      &=2^{1-\frac{1}{p}}C_Q^{\frac{1}{p}}\left(\frac{r^{\lambda}+s^{\lambda}}{r^Q}\right)^{\frac{1}{p}}2^{k\frac{Q-\lambda}{p}}\left[f\right]_{L^{p,\lambda}_C\left(X,\mu\right)}.
    \end{align*}
    By passing to the limit, we see that the function \(\tilde{f}_r\) does not depend on \(r\), so from this point we will just call it \(\tilde{f}\). From the Lebesgue differentiation theorem, we get that \(\tilde{f}=f\) \(\mu\)-almost everywhere on \(X\). Moreover, from (\ref{K}) with $n=0$ for any \(r\in\left(0,3\diam X\right)\) we get 
    \[\left|f_{B\left(x,r\right)}-f_{B\left(x,r/2^{k}\right)}\right| \leq Kr^{\frac{\lambda-Q}{p}}\left[f\right]_{L^{p,\lambda}_C\left(X,\mu\right)}.\]
   Hence, by passing to the limit, we get
        \[\left|f_{B\left(x,r\right)}-\tilde{f}\left(x\right)\right| \leq Kr^{\frac{\lambda-Q}{p}}\left[f\right]_{L^{p,\lambda}_C\left(X,\mu\right)}.\]
In particular, we get that the convergence is uniform.
    \end{proof}
        Let \(f\in L^{p,\lambda}_C\left(X,\mu\right) \) and denote the function from Lemma \ref{lem:uniformLeb} by \(\tilde{f}\). Furthermore, let us fix \(x,y\in X\) and denote \(r=2d\left(x,y\right)\).   We have 
        \begin{align}\label{eq:triangIneqHolder}
        \left|\tilde{f}\left(x\right)-\tilde{f}\left(y\right)\right|\leq\left|\tilde{f}\left(x\right)-f_{B\left(x,r\right)}\right|+\left|f_{B\left(x,r\right)}-f_{B\left(y,r\right)}\right|+\left|f_{B\left(y,r\right)}-\tilde{f}\left(y\right)\right|.
    \end{align}
    From Lemma \ref{lem:uniformLeb} we get 
\begin{eqnarray}\label{k1}
\left|\tilde{f}\left(x\right)-f_{B\left(x,r\right)}\right|\leq K\left[f\right]_{L^{p,\lambda}_C\left(X,\mu\right)}\left(2d\left(x,y\right)\right)^{\alpha}.
\end{eqnarray}
     Furthermore, we see that
      \begin{align*}
        \left|f_{B\left(x,r\right)}-f_{B\left(y,r\right)}\right|\leq \left|f_{B\left(x,r\right)}-f\right|+\left|f-f_{B\left(y,r\right)}\right|.
    \end{align*}
    By integrating both sides of the above inequality and using the definition of the Campanato seminorm, we get
        \begin{align*}
        \int_{B\left(x,r\right)\cap B\left(y,r\right)}\left|f_{B\left(x,r\right)}-f_{B\left(y,r\right)}\right|\dd\mu &\leq \int_{B\left(x,r\right)\cap B\left(y,r\right)}\left|f_{B\left(x,r\right)}-f\right|\dd\mu+\int_{B\left(x,r\right)\cap B\left(y,r\right)}\left|f-f_{B\left(y,r\right)}\right|\dd\mu\\
        &\leq \int_{B\left(x,r\right)}\left|f_{B\left(x,r\right)}-f\right|\dd\mu+\int_{B\left(y,r\right)}\left|f-f_{B\left(y,r\right)}\right|\dd\mu\\
        &\leq \left(\left(\mu\left(B\left(x,r\right)\right)\right)^{1-\frac{1}{p}}+\left(\mu\left(B\left(y,r\right)\right)\right)^{1-\frac{1}{p}}\right)r^{\frac{\lambda}{p}}\left[f\right]_{L^{p,\lambda}_C\left(X,\mu\right)}.
    \end{align*}
   Since the following inclusion holds
    \[B\left(x,r/2\right)\cup B\left(y,r/2\right)\subseteq B\left(x,r\right)\cap B\left(y,r\right),\]
     we get
    \begin{align*}
        \left|f_{B\left(x,r\right)}-f_{B\left(y,r\right)}\right|&\leq \left(\frac{\left(\mu\left(B\left(x,r\right)\right)\right)^{1-\frac{1}{p}}}{\mu\left(B\left(x,r\right)\cap B\left(y,r\right)\right)}+\frac{\left(\mu\left(B\left(y,r\right)\right)\right)^{1-\frac{1}{p}}}{\mu\left(B\left(x,r\right)\cap B\left(y,r\right)\right)}\right)r^{\frac{\lambda}{p}}\left[f\right]_{L^{p,\lambda}_C\left(X,\mu\right)}\\
        &\leq \left(\frac{\left(\mu\left(B\left(x,r\right)\right)\right)^{1-\frac{1}{p}}}{\mu\left(B\left(x,r/2\right)\right)}+\frac{\left(\mu\left(B\left(y,r\right)\right)\right)^{1-\frac{1}{p}}}{\mu\left(B\left(y,r/2\right)\right)}\right)r^{\frac{\lambda}{p}}\left[f\right]_{L^{p,\lambda}_C\left(X,\mu\right)}\\
        &\leq \left(\frac{C_{\mu}}{\left(\mu\left(B\left(x,r\right)\right)\right)^{\frac{1}{p}}}+\frac{C_{\mu}}{\left(\mu\left(B\left(y,r\right)\right)\right)^{\frac{1}{p}}}\right)r^{\frac{\lambda}{p}}\left[f\right]_{L^{p,\lambda}_C\left(X,\mu\right)}\\
        &\leq 2\frac{C_{\mu}}{\left(C_Q^{-1}r^Q\right)^{\frac{1}{p}}}r^{\frac{\lambda}{p}}\left[f\right]_{L^{p,\lambda}_C\left(X,\mu\right)}=2C_{\mu}C_Q^{\frac{1}{p}}\left(2d\left(x,y\right)\right)^{\alpha}\left[f\right]_{L^{p,\lambda}_C\left(X,\mu\right)},
    \end{align*}
where we applied the doubling condition and the lower $Q$-regularity of the measure $\mu$. 
    Next, plugging the above estimate with (\ref{k1}) to inequality \eqref{eq:triangIneqHolder} we get
    \begin{align*}
        \left|\tilde{f}\left(x\right)-\tilde{f}\left(y\right)\right|
        &\leq K\left(2d\left(x,y\right)\right)^{\alpha}\left[f\right]_{L^{p,\lambda}_C\left(X,\mu\right)} + 2C_{\mu}C_Q^{\frac{1}{p}}\left(2d\left(x,y\right)\right)^{\alpha}\left[f\right]_{L^{p,\lambda}_C\left(X,\mu\right)} + K\left(2d\left(x,y\right)\right)^{\alpha}\left[f\right]_{L^{p,\lambda}_C\left(X,\mu\right)}\\
        &=2^{1+\alpha}\left(K+C_{\mu}C_Q^{\frac{1}{p}}\right)\left(d\left(x,y\right)\right)^{\alpha}\left[f\right]_{L^{p,\lambda}_C\left(X,\mu\right)}.
    \end{align*}
    Hence, we have
	\begin{eqnarray}\label{seminot}    
\sup_{x,y\in X, x\neq y}\frac{\left|\tilde{f}\left(x\right)-\tilde{f}\left(y\right)\right|}{\left(d\left(x,y\right)\right)^{\alpha}}\leq 2^{1+\alpha}\left(K+C_{\mu}C_Q^{\frac{1}{p}}\right)\left[f\right]_{L^{p,\lambda}_C\left(X,\mu\right)}.
	\end{eqnarray}
    From the above inequality we have that \(\tilde{f}\) is continuous. We are left to bound \(\sup_{x\in X}\left|\tilde{f}\left(x\right)\right|\). Fix \(x\in X, r\in\left(0,\diam X\right)\). From Lemma \ref{lem:uniformLeb} we get
    \begin{align*}
        \left|\tilde{f}\left(x\right)\right|&\leq \left|F\left(x\right)-f_{B\left(x,r\right)}\right|+\left|f_{B\left(x,r\right)}\right|\leq  Kr^{\frac{\lambda-Q}{p}}\left[f\right]_{L^{p,\lambda}_C\left(X,\mu\right)}+\left(\fint_{B\left(x,r\right)}\left|f\right|^p\dd\mu\right)^{\frac{1}{p}}\\
&\leq Kr^{\frac{\lambda-Q}{p}}\left[f\right]_{L^{p,\lambda}_C\left(X,\mu\right)}+\frac{1}{\left(\mu\left(B\left(x,r\right)\right)\right)^{\frac{1}{p}}}\left\|f\right\|_{L^p\left(X,\mu\right)}\\
        &\leq Kr^{\frac{\lambda-Q}{p}}\left[f\right]_{L^{p,\lambda}_C\left(X,\mu\right)}+\frac{1}{\left(C_Q ^{-1}r^Q\right)^{\frac{1}{p}}}\left\|f\right\|_{L^p\left(X,\mu\right)}\leq\left(Kr^{\frac{\lambda-Q}{p}}+C_Q^{\frac{1}{p}}r^{-\frac{Q}{p}}\right)\left\|f\right\|_{L^{p,\lambda}_C\left(X,\mu\right)}.
    \end{align*}
    So we have
    \begin{align*}
        \left\|\tilde{f}\right\|_{C^{0,\alpha}\left(X\right)}\leq \left(\inf_{r\in\left(0, \diam X\right)}\left(Kr^{\frac{\lambda-Q}{p}}+C_Q^{\frac{1}{p}}r^{-\frac{Q}{p}}\right)+2^{1+\alpha}\left(K+C_{\mu}C_Q^{\frac{1}{p}}\right)\right)\left\|f\right\|_{L^{p,\lambda}_C\left(X,\mu\right)}.
    \end{align*}
    \end{proof}

\begin{ex}
     Let \(\left(\Omega, |\cdot|, \nu\right)\), where \(\Omega  =\left\{\left(x_1,x_2\right)\in\mathbb{R}^2:x_2\in\left[0,1\right], |x_1|\leq x_2^2\right\}, \nu= \lambda_{ \mres \Omega}\), and \(\left|\cdot\right|\) is the Euclidean distance. Then it is obvious that \(\nu\) is upper 2-regular and not lower 2-regular. Moreover, for \(p\in\left[1,+\infty\right), \lambda\in\left[2+p,+\infty\right)\) we have 
     \[L_C^{p,\lambda}\left(\Omega,\nu\right)\hookrightarrow{}C^{0,\alpha}\left(\Omega\right),\]
     where \(\alpha = \left(\lambda-2\right)/p\).
\end{ex}
\begin{proof}
Frist of all, let us observe that from Theorem \ref{camp} for every ball $B \subset \Omega$ we have that \(L_C^{p, \lambda}\left(B\right)\hookrightarrow C^{0,\alpha}\left(B\right)\). Moreover, for every \(u\in L_C^{p,\lambda}\left(\Omega,\nu\right)\) such that \(\left\|u\right\|_{L_C^{p,\lambda}\left(\Omega,\nu\right)}\leq 1\) we have
\begin{eqnarray}\label{seminorma1}
	 \left[u\right]_{C^{0,\alpha}\left(B\right)} \leq C,
\end{eqnarray}
where by (\ref{seminot}) the constant $C$ does not depend on $B$.

  Let us fix  \(f\in L_C^{p,\lambda}\left(\Omega,\nu\right)\) such that \(\left\|f\right\|_{L_C^{p,\lambda}\left(\Omega,\nu\right)}\leq 1\)  and let \(x=(x_1,x_2),y=(y_1,y_2)\) be two points from the interior of \(\Omega\). Without loss of generality, we can assume that \(|x_1|\leq|y_1|\). Define \(z=(x_1,y_2)\), then the line segments connecting \(x\) with \(z\) and \(z\) with \(y\) lie within the interior of \(\Omega\). Moreover, we have
    \[\left|x-z\right|+\left|z-y\right|\leq \sqrt{2} \left|x-y\right|.\]
    From the compactness of the line segment connecting \(x\) with \(z\) we can cover it with open balls \(\left(B_i\right)_{i=1}^n\) such that \(B_i\subset \Omega\) for each \(i=0,1,\dots,n\) and \(B_i\cap B_j\neq \emptyset\) if and only if \(\left|i-j\right|\leq 1\). Let us define a sequence of points \(\left(\xi_i\right)_{i=0}^n\) from the line segment connecting \(x\) with \(z\) such that \(\xi_0=x, \xi_n=z\) and \(\xi_i\in B_i\cap B_{i+1}\) for \(i=1,2,\dots,n-1\). Therefore, by (\ref{seminorma1}) we have
    \[\left|f\left(\xi_i\right)-f\left(\xi_{i+1}\right)\right|\leq C \left|\xi_i-\xi_{i+1}\right|^{\alpha}.\]
    Since  \(\alpha \geq1\), by superadditivity of the function \([0,\infty) \ni t\mapsto t^{\alpha}\) we have
    \[\left|f\left(x\right)-f\left(z\right)\right|\leq \sum_{i=0}^{n-1}\left|f\left(\xi_i\right)-f\left(\xi_{i+1}\right)\right| \leq C \sum_{i=0}^{n-1}\left|\xi_i-\xi_{i+1}\right|^{\alpha}\leq C \left(\sum_{i=0}^{n-1}|\xi_i-\xi_{i+1}|\right)^{\alpha}= C\left|x-z\right|^{\alpha}.\]
    Similarly
    \[\left|f\left(z\right)-f\left(y\right)\right|\leq C\left|z-y\right|^{\alpha}.\]
    Hence, we get
    \[\left|f\left(x\right)-f\left(y\right)\right|\leq C\left|x-z\right|^{\alpha}+C\left|z-y\right|^{\alpha}\leq 2^{\alpha/2}C \left|x-y\right|^{\alpha}.\]
    In the case when at least one of the points \(x,y\) lies on the boundary, the standard limiting argument concludes that
\begin{eqnarray}\label{seminorma}
	 \left[f\right]_{C^{0,\alpha}\left(X\right)} \leq 2^{\alpha/2}C.
\end{eqnarray}
Next, in order to control the supremum norm we fix $\tilde{x} \in \hbox{int} \Omega$ and $\tilde{r} \in (0,\infty)$ such $B(\tilde{x},\tilde{r})\subset  \Omega$. Then, by Theorem \ref{camp} there exists $C_{\tilde{x},\tilde{r}}$ which does not depend on $f$ and $|f(\tilde{x})|\leq C_{\tilde{x},\tilde{r}}$. Therefore, from (\ref{seminorma}), for every $x\in\Omega$ we have
\[
	|f(x)|\leq |f(x)-f(\tilde{x})|+|f(\tilde{x})| \leq 2^{\alpha/2}C (\hbox{diam} \Omega)^{\alpha}+C_{\tilde{x},\tilde{r}}.
\]  
Finally, we get
\[
	\|f\|_{C^{0,\alpha}(\Omega)}\leq 2^{\alpha/2}C +2^{\alpha/2}C (\hbox{diam} \Omega)^{\alpha}+C_{\tilde{x},\tilde{r}}.
\]
\end{proof}

\begin{tw} \label{dolne}
        Let \(\left(X,d,\mu\right)\) be a compact metric-measure space such that \(\left(X,d\right)\) is \(\eta\)-uniformly perfect metric space, $Q \in (0, \infty)$ and
one of the following conditions holds
\begin{itemize}
\item[{I)}]   \(p\in\left[1,\infty\right), \lambda\in\left(Q,p\right]\);
\item[{II)}] \(p\in\left[1,\infty\right), \lambda\in\left(Q,Q+p\right)\) and \(\mu\) is upper \(Q\)-regular.
\end{itemize}
If the measure \(\mu\) is not lower \(Q\)-regular, then there exists a function
        \[f\in L^{p,\lambda}_C\left(X,\mu\right)\setminus C^{0, \alpha}\left(X\right),\]
        where\footnote{It means that $f$ has no continuous representative belonging to $C^{0, \alpha}\left(X\right)$.} \(\alpha = (\lambda-Q)/p.\)
    \end{tw}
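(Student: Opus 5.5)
Since $\mu$ is not lower $Q$-regular, Lemma \ref{lem:auxLem} applies and gives disjoint balls $B_n=B(x_n,r_n)$ with $r_n\downarrow 0$, $x_n\to x_0$, $\mu(B_n)\le C10^{-n}r_n^Q$, $B_m\cap B(x_n,3r_n)=\emptyset$ for $m>n$, and $X\setminus B_n\neq\emptyset$. I would build the counterexample as a sum of bumps adapted to these balls,
\[
f=\sum_{n=1}^\infty a_n\,\varphi_n,\qquad \varphi_n(y)=\max\Bigl\{0,\,1-\tfrac{d(y,x_n)}{r_n}\Bigr\},
\]
with heights $a_n=c_n r_n^{\alpha}$ and $c_n\to\infty$ chosen slowly relative to the decay $10^{-n}$. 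Each $\varphi_n$ is supported in $B_n$ and is $1/r_n$-Lipschitz. The goal is to show $f\notin C^{0,\alpha}$ while $[f]_{L^{p,\lambda}_C}<\infty$.

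\emph{Failure of H\"older regularity.} By uniform perfectness and $X\setminus B_n\ne\emptyset$, there is a point $z_n$ with $d(x_n,z_n)\in[\eta r_n,r_n)$, or at least a point just outside $B_n$ at distance comparable to $r_n$. Since the supports are disjoint, $f(x_n)=a_n$ and $f$ is small there. Hence
\[
\frac{|f(x_n)-f(z_n)|}{d(x_n,z_n)^\alpha}\gtrsim c_n\to\infty .
\]
Any continuous representative must agree with $f$ at these points. The masses may be tiny, but each $\varphi_n$ is continuous and $f$ is continuous away from $x_0$, so the continuous representative coincides with $f$ on $X\setminus\{x_0\}$ where $\mu$ charges every ball. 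Therefore $f$ has no representative in $C^{0,\alpha}(X)$.

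\emph{Campanato bound: the main obstacle.} Fix a ball $B=B(x,r)$. By disjointness,
\[
\int_B|f-f_B|^p\,d\mu\le 2^p\int_B|f-c|^p\,d\mu
\]
for a suitable constant $c$ (Lemma \ref{lem:equivOfNorms}), and I would take $c=0$ or $c$ equal to a single bump value. There are two regimes.

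\begin{itemize}
\item[(a)] If $B$ meets only one bump, say $B_n$, use the Lipschitz bound:
\[
|f(y)-f(y')|\le a_n\min\{1,2r/r_n\}.
\]
This gives
\[
\int_B|f-f_B|^p\le \mu(B\cap B_n)\,c_n^p r_n^{\alpha p}\min\{1,r/r_n\}^p .
\]
Combine this with $\mu(B\cap B_n)\le \min\{\mu(B),C10^{-n}r_n^Q\}$. In case I, use $\lambda\le p$: the term $r_n^{\alpha p-p}r^p$ is controlled by $r^\lambda$ via $r^{p}\le r^\lambda r_n^{p-\lambda}$ when $r\le r_n$, leaving the factor $c_n^p10^{-n}r_n^{Q+\alpha p-\lambda}=c_n^p10^{-n}$, which is bounded. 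In case II, use the upper regularity $\mu(B)\le C_Qr^Q$ together with $\lambda<Q+p$, so that $r^{Q+p}r_n^{\alpha p-p}\le r^\lambda$ for $r\le r_n$. For $r\ge r_n$ the measure bound $10^{-n}r_n^Q$ suffices.
\item[(b)] If $B$ meets several bumps $B_m$, $m\ge n$, the separation $B_m\cap B(x_n,3r_n)=\emptyset$ forces $r\gtrsim r_n$ for all but the first bump hit. Then crudely
\[
\int_B|f|^p\le\sum_{m\ge n}a_m^p\mu(B_m)\le \sum_m c_m^p10^{-m}r_m^{\alpha p+Q}\lesssim r_n^{\lambda}\lesssim r^\lambda,
\]
since $\alpha p+Q=\lambda$ and $r_m\le r_n$.
\end{itemize}

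I expect the delicate part to be the case analysis in (a) and the geometric claim in (b). Concretely, one must check that a ball meeting two bumps has radius comparable to the larger of them, which uses property (d) of Lemma \ref{lem:auxLem}. One must also choose $c_n=2^{n}$ (say), so that $\sum c_n^p10^{-n}<\infty$ for $p$ small, adjusting to $c_n=(10^{n}/n^2)^{1/(2p)}$ if needed. Finally, $f\in L^p$ because $\int|f|^p\le\sum a_n^p\mu(B_n)<\infty$.
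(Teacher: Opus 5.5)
Your overall scheme is the paper's: bumps of height $c_nr_n^{\alpha}$ and slope $c_nr_n^{\alpha-1}$ on the balls of Lemma \ref{lem:auxLem}, blow-up of the H\"older quotient at $x_n,z_n$, and a split into balls meeting one bump versus several. The non-H\"older part, the $L^p$ bound, regime (b) and case I of regime (a) work. There is a genuine gap in case II of regime (a). For $r\le r_n$ you invoke $r^{Q+p}r_n^{\alpha p-p}\le r^{\lambda}$, but the quantity you actually have to bound is
\[
\frac{1}{r^{\lambda}}\,C_Qr^{Q}\,c_n^{p}r_n^{\alpha p}\Bigl(\frac{r}{r_n}\Bigr)^{p}=C_Q\,c_n^{p}\Bigl(\frac{r}{r_n}\Bigr)^{p(1-\alpha)} .
\]
You have dropped the factor $c_n^p\to\infty$, and for $r$ comparable to $r_n$ this quantity is of order $c_n^p$. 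Upper regularity alone cannot close the estimate. You must take the minimum with the small-mass bound from $\mu(B\cap B_n)\le C10^{-n}r_n^{Q}$ and maximize over $r$. With the paper's crude mass bound the two estimates cross at $r\approx r_n10^{-n/(Q+p)}$, and boundedness there forces a second growth restriction, $c_n\lesssim 10^{n(1-\alpha)/(Q+p)}$. Keeping the factor $(r/r_n)^p$ in the mass term as well, they cross at $r_n10^{-n/Q}$ and you need $c_n^p\lesssim 10^{np(1-\alpha)/Q}$. This restriction is exactly the role of the entry $\frac{1-\alpha}{\alpha(p+Q)}$ in the paper's $\kappa$, where $c_n\sim10^{\kappa\alpha n}$. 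Your only constraint is $\sum_n c_n^p10^{-n}<\infty$, and your explicit choices violate the needed one: for $c_n^p\approx10^{n/2}$, $Q=2$, $p=1$, it fails for every $\lambda\in(2,3)$.

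This is not merely a lossy estimate. Nothing in the hypotheses prevents the mass of $B_n$ from sitting on a sub-ball $D$ of radius $\rho\approx r_n10^{-n/Q}$ at distance about $r_n/2$ from $x_n$. For example, take a density bounded by $1$ on a square in $\mathbb{R}^2$; this keeps $\mu$ upper $2$-regular and $\mu(B_n)\lesssim10^{-n}r_n^2$. On $D$ your $f$ has slope about $c_nr_n^{\alpha-1}$, so $\rho^{-\lambda}\int_D|f-f_D|^p\,\dd\mu\approx c_n^p(\rho/r_n)^{p(1-\alpha)}$. This is unbounded once $c_n^p$ outgrows $10^{np(1-\alpha)/Q}$, so with your $c_n$ the function need not lie in $L^{p,\lambda}_C$.

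A smaller point: in regime (a) the Lipschitz bound by itself gives $\mu(B)$, not $\mu(B\cap B_n)$, as the prefactor, and in case I you have no upper bound on $\mu(B)$. You can repair this by taking $c=0$ when $B\not\subset B_n$: then $f$ vanishes at a point of $B$, so $|f|\le a_n\min\{1,2r/r_n\}$ on $B$ and $f=0$ on $B\setminus B_n$. The paper instead shrinks the supports to $B(x_n,r_n/10)$, so that small balls meeting a support lie inside $B(x_n,r_n)$. With both repairs, for instance $c_n=10^{\kappa\alpha n}$ with the paper's $\kappa$, your argument goes through.
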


\begin{q}
Is it true that Theorem \ref{dolne} holds for $\lambda\in\left(Q,Q+p\right)$ without the assumption that the measure is upper \(Q\)-regular?
\end{q}

    \begin{proof}    The methods of the proof are based on the paper \cite{O}.
        Let \(\left(x_n\right)_{n=1}^{\infty}\subseteq X, \left(r_n\right)_{n=1}^{\infty}\subseteq \left(0,3\diam X\right)\) be sequences from Lemma \ref{lem:auxLem}. Let us define the following sequence of functions
        \begin{align*}
            u_n\left(x\right)=\begin{cases}
            r_n/10-d\left(x,x_n\right) & \text{if } x\in B\left(x_n,r_n/10\right);\\
            0 & \text{if } x\notin B\left(x_n,r_n/10\right);
            \end{cases}
        \end{align*}
        and the following function
        \[f\left(x\right)=\sum_{n=1}^{\infty}\frac{10^{\kappa \alpha n}}{r_n^{1-\alpha}}u_n\left(x\right),\]
        where \(\kappa=\min\left\{\left(2\alpha p\right)^{-1},\frac{1-\alpha}{\alpha\left(p+Q\right)}\right\}>0\). Furthermore, let $A:=\bigcup_{n=1}^{\infty}B(x_n, r_n)$. The function \(f\) is well-defined, as the balls \(B\left(x_n,r_n\right)\) are pairwise disjoint, and $f|_A$ is continuous on $A$.

First of all, we shall prove that $f$ has no continuous representative belonging to $C^{0, \alpha}\left(X\right)$. 
        From uniform perfectness we obtain that for any \(n\in\mathbb{N}\) there exists \(z_n\in X\) such that
        \[z_n\in B\left(x_n,r_n/10\right)\setminus B\left(x_n,\eta r_n/10\right).\]
        Then we get
        \begin{align*}
            \sup_{x,y\in X, x\neq y}\frac{\left|f\left(x\right)-f\left(y\right)\right|}{d\left(x,y\right)^{\alpha}}&\geq \sup_{x,y\in A, x\neq y}\frac{\left|f\left(x\right)-f\left(y\right)\right|}{d\left(x,y\right)^{\alpha}}\geq \limsup_{n\to\infty}\frac{\left|f\left(x_n\right)-f\left(z_n\right)\right|}{d\left(x_n,z_n\right)^{\alpha}}\\&= \limsup_{n\to\infty}\frac{10^{\kappa \alpha n}d\left(x_n,z_n\right)^{1-\alpha}}{r_n^{1-\alpha}}
            \geq\limsup_{n\to\infty}10^{\kappa\alpha n-1}\eta^{1-\alpha}=\infty.  
        \end{align*}
        Therefore \(f\notin C^{0,\alpha}\left(X\right)\) and \(f|_A \notin C^{0,\alpha}\left(A\right)\). Since the measure of every nonempty open set is positive, every continuous representative of $f$ coincide with $f$ on $A$. Therefore, there are no representatives of $f$ belonging to $ C^{0,\alpha}\left(X\right)$.

Next, we will show that \(f\in L^{p,\lambda}_C\left(X,\mu\right)\). Firstly, we will prove that
        \begin{equation}\label{eq:BnEstimate}
        \int_{B\left(x_n,r_n/10\right)}\left|f\right|^p\dd\mu\leq Cr_n^{\lambda}10^{\left(\kappa\alpha p-1\right)n},
    \end{equation}
    where \(C=20^Q\eta^{-Q}\) is the constant from Lemma \ref{lem:auxLem}. Indeed, by Lemma \ref{lem:auxLem} we have
    \begin{align*}
        \int_{B\left(x_n,r_n/10\right)} \left|f\left(x\right)\right|^p\dd\mu\left(x\right)
        &=\int_{B\left(x_n,r_n/10\right)} \left|\frac{10^{\kappa \alpha n}}{r_n^{1-\alpha}}\left(r_n/10-d\left(x,x_n\right)\right)\right|^p\dd\mu\left(x\right)\\
        &\leq \mu\left(B\left(x_n,r_n/10\right)\right)10^{\kappa \alpha np}r_n^{\alpha p}10^{-p}\\
        &\leq  C10^{-n}r_n^Q 10^{\kappa \alpha np}r_n^{\alpha p}= Cr_n^{\lambda}10^{\left(\kappa\alpha p-1\right)n}.
    \end{align*}
    From inequality \eqref{eq:BnEstimate} we get 
       \begin{align*}
        \int_X \left|f\right|^p\dd\mu &= \int_{\bigcup_{n=1}^{\infty}B\left(x_n,r_n/10\right)} \left|f\right|^p\dd\mu\\
        &\leq \sum_{n=1}^{\infty}Cr_n^{\lambda}10^{\left(\kappa\alpha p-1\right)n}\leq Cr_1^{\lambda}\sum_{n=1}^{\infty}10^{\left(\kappa\alpha p-1\right)n}<\infty,
    \end{align*}
    because \(\kappa\leq \left(2\alpha p\right)^{-1}\). Therefore \(f\in L^p\left(X,\mu\right)\).
    
Next, we shall show that the Campanato seminorm of $f$ is finite. For this purpose, we fix \(x\in X, r>0\). Observe that if there exists a ball \(B\left(x_i,r_i/10\right)\) such that
    \begin{equation}\label{eq:BiCondition}
        B\left(x,r\right)\cap B\left(x_i,r_i/10\right)\neq \emptyset \text{ and } r_i/10>r,
    \end{equation}
    then
    \[\forall_{k\in\mathbb{Z}_+, k\neq i}B\left(x,r\right)\cap B\left(x_k,r_k/10\right)=\emptyset.\]
    Indeed, let us suppose that there exists a ball such that it satisfies the condition \eqref{eq:BiCondition} and that \(i\) is the smallest index for which it holds. If $k<i$, then by monotonicity of \(\left(r_n\right)_{n=1}^{\infty}\) and minimality of $i$, we get $B\left(x,r\right)\cap B\left(x_k,r_k/10\right)=\emptyset.$ On the other hand we have 
$B\left(x,r\right)\subseteq B\left(x_i, 3r_i/10\right)$. Thus, if $k>i$, by Lemma \ref{lem:auxLem} we get 
    \[B\left(x,r\right)\cap B\left(x_k,r_k/10\right)\subseteq B\left(x_i,3r_i/10\right)\cap B\left(x_k,r_k/10\right)\subseteq B\left(x_i,3r_i\right)\cap B\left(x_k,r_k\right)=\emptyset.\]

   In order to control the Campanato seminorm, we consider two cases:
    \begin{equation}\label{eq:BiNorm}
        \inf_{c\in\mathbb{R}}\int_{B\left(x,r\right)}\left|f-c\right|^p\dd\mu=\inf_{c\in\mathbb{R}} \left(\int_{B\left(x,r\right)\cap B\left(x_i,r_i/10\right)}\left|f-c\right|^p\dd\mu+\int_{B\left(x,r\right)\setminus B\left(x_i,r_i/10\right)}\left|c\right|^p\dd\mu\right),
    \end{equation}
    where \(B\left(x_i,r_i/10\right)\) satisfies condition \eqref{eq:BiCondition} or
    \begin{align}\label{eq:notBiNorm}
        \begin{split}
            \inf_{c\in\mathbb{R}}\int_{B\left(x,r\right)}\left|f-c\right|^p\dd\mu&\leq\int_{B\left(x,r\right)}\left|f\right|^p\dd\mu=\int_{\bigcup_{\left\{j\in\mathbb{Z}_+:r_j/10\leq r\right\}}B\left(x_j,r_j/10\right)\cap B\left(x,r\right)}\left|f\right|^p\dd\mu\\
            &\leq\sum_{\left\{j\in\mathbb{Z}_+:r_j/10\leq r\right\}}\int_{B\left(x_j,r_j/10\right)}\left|f\right|^p\dd\mu.
        \end{split}
    \end{align}
    Let us begin with the case from equality \eqref{eq:BiNorm}. From inequality \eqref{eq:BnEstimate} we get 
    \begin{equation}\label{eq:BxrEstimate1}
        \inf_{c\in\mathbb{R}} \left(\int_{B\left(x,r\right)\cap B\left(x_i,r_i/10\right)}\left|f-c\right|^p\dd\mu+\int_{B\left(x,r\right)\setminus B\left(x_i,r_i/10\right)}\left|c\right|^p\dd\mu\right)\leq\int_{B\left(x_i,r_i/10\right)}\left|f\right|^p\dd\mu\leq Cr_i^{\lambda}10^{\left(\kappa\alpha p-1\right)i}.
    \end{equation}
    Now we will estimate it differently, observe that
    \begin{multline*}
                \inf_{c\in\mathbb{R}} \left(\int_{B\left(x,r\right)\cap B\left(x_i,r_i/10\right)}\left|f-c\right|^p\dd\mu+\int_{B\left(x,r\right)\setminus B\left(x_i,r_i/10\right)}\left|c\right|^p\dd\mu\right)\leq\\
        \int_{B\left(x,r\right)\cap B\left(x_i,r_i/10\right)}\left|f-\tilde{c}\right|^p\dd\mu+\int_{B\left(x,r\right)\setminus B\left(x_i,r_i/10\right)}\left|\tilde{c}\right|^p\dd\mu,
    \end{multline*}
    where \(\tilde{c}:=\frac{10^{\kappa \alpha i}}{r_i^{1-\alpha}}\left(r_i/10-d\left(x,x_i\right)-r\right)\). For \(y\in B\left(x,r\right)\cap B\left(x_i,r_i/10\right)\) we have 
    \begin{align*}
        \left|f\left(y\right)-\tilde{c}\right|^p&=\left|\frac{10^{\kappa \alpha i}}{r_i^{1-\alpha}}\left(r_i/10-d\left(y,x_i\right)-r_i/10+d\left(x,x_i\right)+r\right)\right|^p\\
        &\leq \frac{10^{\kappa \alpha ip}}{r_i^{p-\alpha p}} \left(\left|d\left(x,x_i\right)-d\left(y,x_i\right)\right|+r\right)^p\\
        &\leq \frac{10^{\kappa \alpha ip}}{r_i^{p-\alpha p}} \left(d\left(x,y\right)+r\right)^p\leq \frac{10^{\kappa \alpha ip}}{r_i^{p-\alpha p}} \left(2r\right)^p.
    \end{align*}
    If there exists \(y\in B\left(x,r\right)\setminus B\left(x_i,r_i/10\right)\), then
     \[r_i/10\leq d\left(x_i,y\right)\leq d\left(x_i,x\right)+d\left(x,y\right)<d\left(x,x_i\right)+r.\]
     Moreover, from condition \eqref{eq:BiCondition} we get that there exists \(z\in B\left(x,r\right)\cap B\left(x_i,r_i/10\right)\), so 
     \[d\left(x,x_i\right)\leq d\left(x,z\right)+d\left(z,x_i\right)<r+r_i/10.\]
     Hence, if \(B\left(x,r\right)\setminus B\left(x_i,r_i/10\right)\neq\emptyset\), then
    \begin{align*}
        \left|\tilde{c}\right|^p=\left|\frac{10^{\kappa \alpha i}}{r_i^{1-\alpha}}\left(r_i/10-d\left(x,x_i\right)-r\right)\right|^p=\frac{10^{\kappa \alpha ip}}{r_i^{p-\alpha p}}\left(r+d\left(x,x_i\right)-r_i/10\right)^p<\frac{10^{\kappa \alpha ip}}{r_i^{p-\alpha p}}\left(2r\right)^p.
    \end{align*}
    Moreover, from condition \eqref{eq:BiCondition} we get that 
    \[B\left(x,r\right)\subseteq B\left(x_i, r_i\right).\]
    Indeed, let \(y\in B\left(x,r\right)\) and \(z\in B\left(x,r\right)\cap B\left(x_i,r_i/10\right)\), then
    \[d\left(x_i,y\right)\leq d\left(x_i,z\right)+d\left(z,x\right)+d\left(x,y\right)<r_i/10+r+r\leq r_i/10+r_i/10+r_i/10<r_i.\]

If $\hbox{I})$ holds, then we obtain
    \begin{eqnarray*}
        \int_{B\left(x,r\right)\cap B\left(x_i,r_i/10\right)}&&\left|f-\tilde{c}\right|^p\dd\mu+\int_{B\left(x,r\right)\setminus B\left(x_i,r_i/10\right)}\left|\tilde{c}\right|^p\dd\mu \\   &\leq&\int_{B\left(x,r\right)\cap B\left(x_i,r_i/10\right)}\frac{10^{\kappa \alpha ip}}{r_i^{p-\alpha p}}\left(2r\right)^p\dd\mu+\int_{B\left(x,r\right)\setminus B\left(x_i,r_i/10\right)}\frac{10^{\kappa \alpha ip}}{r_i^{p-\alpha p}}\left(2r\right)^p\dd\mu\\
        &=&\int_{B\left(x,r\right)}\frac{10^{\kappa \alpha ip}}{r_i^{p-\alpha p}}\left(2r\right)^p\dd\mu=\frac{10^{\kappa \alpha ip}}{r_i^{p-\alpha p}} 2^pr^p\mu\left(B\left(x,r\right)\right)\\
&\leq&\frac{10^{\kappa \alpha ip}}{r_i^{p-\alpha p}} 2^pr^p\mu\left(B\left(x_i,r_i\right)\right)
        \leq C\frac{10^{\left(\kappa \alpha p-1\right)i}}{r_i^{p-\alpha p}} 2^pr^pr_i^Q.
    \end{eqnarray*}
    From the above and inequality \eqref{eq:BxrEstimate1}, we get 
    \begin{align*}
       \frac{1}{r^{\lambda}} \inf_{c\in\mathbb{R}} \int_{B\left(x,r\right)}\left|f-c\right|^p\dd\mu &\leq C2^p\frac{1}{r^{\lambda}}\min\left\{r_i^{\lambda}10^{\left(\kappa\alpha p-1\right)i}, \frac{10^{\left(\kappa\alpha p-1\right)i}}{r_i^{p-\alpha p}} r^pr_i^Q\right\}\\
         &=C2^pr_i^{\lambda}10^{\left(\kappa\alpha p-1\right)i}\min\left\{r^{-\lambda}, r_i^{-p} r^{p-\lambda}\right\}.
    \end{align*}
     Let us observe that the map  \(r\mapsto \min\left\{r^{-\lambda}, r_i^{-p} r^{p-\lambda}\right\}\) has maximum value at $r=r_i$. Thus, we get
    \begin{align*}
       \frac{1}{r^{\lambda}} \inf_{c\in\mathbb{R}} \int_{B\left(x,r\right)}\left|f-c\right|^p\dd\mu\leq C2^p10^{\left(\kappa\alpha p-1\right)i} \leq C2^p10^{-i/2}<C2^p,
    \end{align*}
since $\kappa \leq (2 \alpha p)^{-1}$.

If $\hbox{II})$ holds, we obtain
    \[
        \int_{B\left(x,r\right)\cap B\left(x_i,r_i/10\right)}\left|f-\tilde{c}\right|^p\dd\mu+\int_{B\left(x,r\right)\setminus B\left(x_i,r_i/10\right)}\left|\tilde{c}\right|^p\dd\mu \leq \frac{10^{\kappa \alpha ip}}{r_i^{p-\alpha p}} 2^pr^p C_Qr^Q.
    \]
    Gathering \eqref{eq:BxrEstimate1} with the above inequality we get 
    \begin{align*}
       \frac{1}{r^{\lambda}} \inf_{c\in\mathbb{R}} \int_{B\left(x,r\right)}\left|f-c\right|^p\dd\mu &\leq \Tilde{C}\frac{1}{r^{\lambda}}\min\left\{r_i^{\lambda}10^{\left(\kappa\alpha p-1\right)i}, \frac{10^{\kappa \alpha ip}}{r_i^{p-\alpha p}} r^{p+Q}\right\}\\
         &=\Tilde{C}r_i^{\alpha p}10^{\kappa\alpha pi}\min\left\{r_i^{Q}10^{-i}r^{-\lambda}, r_i^{-p} r^{p\left(1-\alpha\right)}\right\},
    \end{align*}
    where \(\Tilde{C}=\max\left\{C, 2^pC_Q\right\}\). Since the map  \(r\mapsto \min\left\{r_i^{Q}10^{-i}r^{-\lambda}, r_i^{-p} r^{p\left(1-\alpha\right)}\right\}\) has maximum value at $r=r_i10^{-\frac{i}{p+Q}}$ and \(\kappa\leq\frac{1-\alpha}{\alpha\left(p+Q\right)}\), we get
    \begin{align*}
       \frac{1}{r^{\lambda}} \inf_{c\in\mathbb{R}} \int_{B\left(x,r\right)}\left|f-c\right|^p\dd\mu\leq\Tilde{C}r_i^{\alpha p}10^{\kappa\alpha pi}r_i^{-p}\left(r_i10^{-\frac{i}{p+Q}}\right)^{p\left(1-\alpha\right)}=\Tilde{C}10^{ip\left(\kappa\alpha-\frac{1-\alpha}{p+Q}\right)}\leq \Tilde{C}.
    \end{align*}

    Now let us come back to inequality \eqref{eq:notBiNorm}. Denote by \(k\in\mathbb{Z}_+\) the smallest index such that \(r_k/10\leq r\), then from inequality \eqref{eq:BnEstimate} we get that
    \begin{align*}
       \frac{1}{r^{\lambda}} \inf_{c\in\mathbb{R}}\int_{B\left(x,r\right)}\left|f-c\right|^p\dd\mu&\leq\frac{1}{r^\lambda}\sum_{\left\{j\in\mathbb{Z}_+:r_j/10\leq r\right\}}\int_{B\left(x_j,r_j/10\right)}\left|f\right|^p\dd\mu\leq \frac{1}{r^\lambda}\sum_{j\geq k} Cr_j^{\lambda}10^{\left(\kappa\alpha p-1\right)j}\\
        &\leq C\frac{r_k^{\lambda}}{r^{\lambda}}\frac{10^{\left(\kappa\alpha p-1\right)k}}{1-10^{\left(\kappa\alpha p-1\right)}}\leq C\frac{10^{\lambda}}{1-10^{\left(\kappa\alpha p-1\right)}}.
    \end{align*} 

Finally, let us recall that
 \begin{align*}
        \vertiii{f}_{p,\lambda}=\left(\sup_{x\in X, r>0}\frac{1}{r^{\lambda}}\inf_{c\in\mathbb{R}}\int_{B(x,r)}\left|f-c\right|^p\dd\mu\right)^{\frac{1}{p}}.
    \end{align*}
Therefore, gathering both cases we obtain
        \begin{align*}
        \vertiii{f}_{p,\lambda}\leq \begin{cases}
             \max\left\{C2^p,C \frac{10^{\lambda}}{1-10^{\kappa \alpha p-1}} \right\}^{\frac{1}{p}} & \text{if  \hbox{I}) holds};\\
             \max\left\{\tilde{C},C \frac{10^{\lambda}}{1-10^{\kappa \alpha p-1}} \right\}^{\frac{1}{p}} &  \text{if  \hbox{II}) holds}.
            \end{cases}
\end{align*} 
 and since $f\in L^{p}(X)$, we conclude that \(f\in L^{p,\lambda}_C\left(X,\mu\right)\).
    \end{proof}
 Gathering Theorem \ref{camp} with Theorem \ref{dolne} we get.
 \begin{tw} \label{ost1}
        Let \(\left(X,d,\mu\right)\) be a metric-measure space such that \(\left(X,d\right)\) is compact and \(\eta\)-uniformly perfect. Let $Q \in (0, \infty), p \in [1, \infty)$, and  $\lambda\in\left(Q,Q+p\right)$. If the measure \(\mu\) is Borel regular and upper \(Q\)-regular, then the following conditions are equivalent.
\begin{enumerate}
\item[i)] The measure $\mu$ is lower $Q$-regular;
\item[ii)] $L^{p,\lambda}_C\left(X,\mu \right)\xhookrightarrow{}C^{0,\alpha}\left(X\right),$
where $\alpha = (\lambda-Q)/p.$
\end{enumerate}
    \end{tw}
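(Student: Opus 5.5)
The plan is to prove the two implications separately, each by invoking one of the two preceding theorems. The hypotheses of Theorem \ref{ost1} are designed to cover both.

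For i) $\Rightarrow$ ii), we apply Theorem \ref{camp} ii) with $a=0$. That result needs $\mu$ to be Borel regular, doubling and lower $Q$-regular, with $\lambda>Q$. Borel regularity is assumed, lower $Q$-regularity is i), and $\lambda\in(Q,Q+p)\subset(Q,\infty)$. The only missing hypothesis is the doubling condition. We get it from the fact that $\mu$ is upper $Q$-regular by assumption and lower $Q$-regular by i), hence Ahlfors $Q$-regular. As remarked in Section 2, an Ahlfors $Q$-regular measure is doubling with $C_\mu=C_Q^2 2^Q$.

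One should check that this remark is valid for all radii. Ahlfors regularity is only stated for $r\in(0,3\diam X)$. For a ball $B(x,r)$ with $2r\geq 3\diam X$ we have $\mu(B(x,2r))=\mu(X)$. If moreover $r\geq 3\diam X$, then also $\mu(B(x,r))=\mu(X)$, so the doubling inequality is trivial. If $r<3\diam X$, lower regularity gives $\mu(B(x,r))\geq C_Q^{-1}r^Q\geq C_Q^{-1}(3\diam X/2)^Q$. Compactness gives $\mu(X)<\infty$, so $\mu(X)/\mu(B(x,r))$ is bounded by a constant. Thus $\mu$ is doubling, and Theorem \ref{camp} ii) yields the embedding $L^{p,\lambda}_C(X,\mu)\hookrightarrow C^{0,\alpha}(X)$ with $\alpha=(\lambda-Q)/p$.

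For ii) $\Rightarrow$ i), we argue by contraposition. Suppose $\mu$ is not lower $Q$-regular. The space $X$ is compact and $\eta$-uniformly perfect, $\mu$ is upper $Q$-regular, and $\lambda\in(Q,Q+p)$. These are exactly the hypotheses of case II) of Theorem \ref{dolne}. That theorem produces $f\in L^{p,\lambda}_C(X,\mu)$ with no representative in $C^{0,\alpha}(X)$. The embedding in ii) would supply exactly such a representative, which is a contradiction. Hence $\mu$ is lower $Q$-regular.

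The main potential obstacle is the doubling property in the first implication, specifically the treatment of radii at or beyond $3\diam X$ described above. Everything else is a direct citation of the two previous theorems.
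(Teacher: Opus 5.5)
Your proposal is correct and matches the paper's argument: i)$\Rightarrow$ii) follows from Theorem \ref{camp} ii) with $a=0$, and ii)$\Rightarrow$i) follows by contraposition from case II) of Theorem \ref{dolne}. Your explicit check that upper plus lower $Q$-regularity gives the doubling condition needed in Theorem \ref{camp} ii) fills in a step the paper leaves implicit. That check can be shortened: for $2r\ge 3\diam X$ you have $r>\diam X$, so both balls equal $X$.
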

\begin{tw} \label{ost2}
        Let \(\left(X,d,\mu\right)\) be a metric-measure space such that \(\left(X,d\right)\) is compact and \(\eta\)-uniformly perfect. Let $Q \in (0, \infty), p \in [1, \infty)$ and  $\lambda\in\left(Q,p\right]$. If the measure \(\mu\) is doubling and Borel regular, then the following conditions are equivalent.
\begin{enumerate}
\item[i)] The measure $\mu$ is lower $Q$-regular;
\item[ii)] $L^{p,\lambda}_C\left(X,\mu \right)\xhookrightarrow{}C^{0,\alpha}\left(X\right),$
where $\alpha = (\lambda-Q)/p.$
\end{enumerate}
    \end{tw}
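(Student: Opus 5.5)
The statement is Theorem \ref{ost2}, and the plan is to combine Theorem \ref{camp} ii) with Theorem \ref{dolne} case I). Since $X$ is compact, it is bounded, and $\mu(X)<\infty$ because $X$ is covered by a single ball of finite measure.

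For i) $\Rightarrow$ ii), assume $\mu$ is lower $Q$-regular. By hypothesis $\mu$ is also Borel regular and doubling. We apply Theorem \ref{camp} ii) with $a=0$, so that $\mu+a\delta_{x_0}=\mu$. The assumption $\lambda\in(Q,p]\subset(Q,\infty)$ puts us in its range, and it yields $L^{p,\lambda}_C(X,\mu)\hookrightarrow C^{0,\alpha}(X)$ with $\alpha=(\lambda-Q)/p$, in the sense of continuous representatives. No further work is needed here.

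For ii) $\Rightarrow$ i), argue by contraposition: suppose $\mu$ is not lower $Q$-regular. All hypotheses of Theorem \ref{dolne} hold: $X$ is compact, $\eta$-uniformly perfect, $Q\in(0,\infty)$, and condition I) is satisfied because $p\in[1,\infty)$ and $\lambda\in(Q,p]$. Notably, case I) requires no upper regularity. The theorem then produces $f\in L^{p,\lambda}_C(X,\mu)$ with no representative in $C^{0,\alpha}(X)$. This contradicts ii), since the embedding in ii) is understood as every element of $L^{p,\lambda}_C$ having a continuous representative in $C^{0,\alpha}(X)$.

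The only point needing care is that the notion of embedding is consistent between the two theorems. The footnotes to Theorem \ref{camp} and Theorem \ref{dolne} both refer to representatives, so the non-existence statement in Theorem \ref{dolne} directly negates the embedding. I would also remark that the interval $(Q,p]$ is nonempty only when $Q<p$; if $Q\ge p$ the statement is vacuous. Beyond this there is no real obstacle; the proof is a direct combination of the two earlier theorems.
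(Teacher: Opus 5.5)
Your proposal is correct and is the paper's own argument. The paper proves Theorem~\ref{ost2} by ``gathering'' Theorem~\ref{camp}~ii) (with $a=0$) for i)$\Rightarrow$ii), and Theorem~\ref{dolne} under condition I), which needs no upper regularity, for ii)$\Rightarrow$i) by contraposition; your check that both theorems use the same notion of embedding via continuous representatives is the right point of care.
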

\begin{q}
Is it true that Theorem \ref{ost2} holds for $\lambda\in\left(Q,Q+p\right)$?
\end{q}

\textbf{Acknowledgements:} We would like to cordially thank Paul Howard for discussion related to Example \ref{Paul}. We would also like to thank David Op\v{e}la for sending us his dissertation.

\smallskip
{\small Przemys{\l}aw  G\'orka}\\
\small{Faculty of Mathematics and Information Science,}\\
\small{Warsaw University of Technology,}\\
\small{Pl. Politechniki 1, 00-661 Warsaw, Poland} \\
{\tt przemyslaw.gorka@pw.edu.pl}\\
\\
{\small Zuzanna  Stępie\'n}\\
\small{Faculty of Mathematics and Information Science,}\\
\small{Warsaw University of Technology,}\\
\small{Pl. Politechniki 1, 00-661 Warsaw, Poland} \\
{\tt zuzanna.stepien2.dokt@pw.edu.pl}\\

\end{document}